\newtheorem{theorem}{Theorem}
\newtheorem{lemma}[theorem]{Lemma}
\newtheorem*{cor}{Corollary}
\begin{document}

\title{The linking pairings of orientable Seifert manifolds}

\author{Jonathan A. Hillman }
\address{School of Mathematics and Statistics\\
     University of Sydney, NSW 2006\\
      Australia }

\email{jonathan.hillman@sydney.edu.au}

\begin{abstract}
We compute the $p$-primary components of the linking pairings of orientable
3-manifolds admitting a fixed-point free $S^1$-action.
Using this, we show that any linking pairing
on a finite abelian group with homogeneous $2$-primary summand
is realized by such a manifold.
However some pairings on inhomogeneous 2-groups are not
realizable by any Seifert fibred 3-manifold.
\end{abstract}

\keywords{linking pairing, Seifert manifold}

\subjclass{57M27, 57N10}

\maketitle

The linking pairings of oriented 3-manifolds which are Seifert fibred 
over non-orientable base orbifolds were computed in \cite{CH}.
Here we shall consider the remaining case, 
when the base orbifold is also orientable.
Thus the Seifert fibration is induced by a fixed-point free 
$S^1$-action on the manifold.
(We shall henceforth call such a space a ``Seifert manifold",
for brevity.)

Every linking pairing on a finite abelian group 
is realized by a closed orientable 3-manifold \cite{KK}.
Bryden and Deloup have used the cohomological formulation 
(recalled below) to show that if the group has odd order
the pairing is realised by a Seifert manifold
which is a $\mathbb{Q}$-homology sphere \cite{BD}.
We shall work directly with the geometric definition,
giving a new proof of this result,
and showing that there are pairings on 2-primary groups
which are not realized by any orientable Seifert fibred 3-manifold
at all (i.e., even if we allow non-orientable base orbifolds).

We give presentations for the localization of the torsion
at a prime $p$ in \S2, which lead to explicit formulae 
for the localized linking pairings in \S3.
We then study the cases $p$ odd and $p=2$ separately,
in \S\S4-5 and \S\S6-7, respectively.
Every linking pairing on a finite abelian group whose
2-primary subgroup is isomorphic to $(\mathbb{Z}/2^k\mathbb{Z})^\rho$ 
(for some $k,\rho$) is realized by some Seifert manifold.
The manifold may be either an ${\mathbb{H}^2\times\mathbb{E}^1}$-manifold,
or a $\widetilde{\mathbb{SL}}$-manifold or a $\mathbb{Q}$-homology sphere.
Theorem 10 gives some constraints on the 2-primary component of the pairing.
Pairings satisfying these constraints and  
a further ``gap" condition are realizable, by Theorem 11.
The final section \S8 summarizes briefly the earlier work of Oh on
the Witt classes of such pairings \cite{Oh}.

\section{linking pairings}

A {\it linking pairing\/} on a finite abelian group $N$ is a symmetric
bilinear function $\ell:N\times {N}\to\mathbb{Q/Z}$ which is nonsingular
in the sense that $\tilde\ell:n\mapsto\ell(-,n)$ defines an isomorphism 
from $N$ to $Hom(N,\mathbb{Q/Z})$.
If $N_1$ is a subgroup of $N$ then 
$N_1^\perp=\{n\in N\mid \ell_M(n,n_1)=0~\forall{n_1\in{N_1}}\}$.
Such a pairing is {\it metabolic\/} if there is a subgroup 
$P$ with $P=P^\perp$,
{\it split\/} \cite{KK} if also $P$ is a direct summand 
and {\it hyperbolic\/} if $N$ is the direct sum of two such subgroups.
If $\ell$ is split $N$ is a direct double.
A linking pairing $\ell$ is {\it even\/} if 
$2^{k-1}\ell(x,x)\in\mathbb{Z}$ for all $x\in{N}$ such that $2^kx=0$,
and {\it odd} otherwise.
Hyperbolic pairings are even.

Every linking pairing splits uniquely as the orthogonal sum (over primes $p$) 
of its restrictions to the $p$-primary subgroups of $N$,
and so our basic strategy is to localize at a prime.
Linking pairings on finite abelian $p$-groups may be further decomposed 
as the orthogonal sum of pairings on homogeneous summands.
If $N$ is homogeneous we shall say that $\ell$ is homogeneous,
while if $\ell=\perp_{i=1}^t\ell_i$,
where $\ell_i$ is a pairing on an homogeneous group of exponent $p^{k_i}$,
and $k_1>\dots>k_t>0$ we shall say that each $\ell_i$ is a
{\it component} of $\ell$.
When $p$ is odd the decomposition into such components is essentially unique.
In particular, every linking pairing on an abelian group of odd order
is an orthogonal sum of pairings on cyclic groups.
However, if the order is even we need also pairings 
$E_0^k$ on $(\mathbb{Z}/2^k\mathbb{Z})^2$, for each $k\geq1$,
and $E_1^k$ on $(\mathbb{Z}/2^k\mathbb{Z})^2$,
for each $k>1$.
The pairings $E_0^k$ are hyperbolic,
while the $E_1^k$s are even but not hyperbolic.
(See \cite{KK,Wa64}.

Let $\ell$ be a linking pairing on $N\cong(\mathbb{Z}/p^k\mathbb{Z})^\rho$
and $L\in\mathrm{GL}(\rho,\mathbb{Z}/p^k\mathbb{Z})$ 
be the matrix with $(i,j)$ entry $p^k\ell(e_i,e_j)$, 
where $e_1,\dots,e_\rho$ is some basis for $N$.
The {\it rank} of $\ell$ is $rk(\ell)=\mathrm{dim}_{\mathbb{F}_p}N/pN=\rho$.
If $p$ is odd then a linking pairing $\ell$ 
on a free $\mathbb{Z}/p^k\mathbb{Z}$-module $N$ is determined 
up to isomorphism by $rk(\ell)$ and the image $d(\ell)$ of $\mathrm{det}(L)$ 
in $\mathbb{F}_p^\times/(\mathbb{F}_p^\times)^2=\mathbb{Z}/2\mathbb{Z}$.
(This is independent of the choice of basis for $N$.)
If $p=2$ and $k\geq3$ then $\ell$ is determined by the image of $L$ in
$\mathrm{GL}(\rho,\mathbb{Z}/8\mathbb{Z})$;
if moreover $\ell$ is even and $k\geq2$ then $\rho$ is even 
and $\ell$ is determined by the image of $L$ in 
$\mathrm{GL}(\rho,\mathbb{Z}/4\mathbb{Z})$.
(See \cite{De,KK,Wa64}.)

If $w=\frac{p}q\in\mathbb{Q}^\times$ (where $(p,q)=1$)
let $\ell_w$ be the pairing on $\mathbb{Z}/q\mathbb{Z}$ given by 
$\ell_w(n,n')=[nn'w]\in\mathbb{Q}/\mathbb{Z}$.
Then $\ell_w\cong\ell_{w'}$ if and only if $a^2w'=b^2w$ 
for some integers $a,b$ with $(a,q)=(b,q)=1$.
In particular, if $q=2^k$ then $\ell_w\cong\ell_{w'}$ 
if and only if $2^kw'\equiv2^kw$ {\it mod} $(2^k,8)$.

If $M$ is a closed oriented 3-manifold Poincar\'e duality determines
a linking pairing $\ell_M:T(M)\times T(M)\to\mathbb{Q/Z}$, which may be 
described as follows.
Let $w$, $z$ be disjoint 1-cycles representing elements of $T(M)$ and 
suppose that $mz=\partial C$ for some nonzero $m\in\mathbb{Z}$
and some 2-chain $C$ which is transverse to $w$.
Then $\ell_M ([w],[z])=(w\bullet{C})/m\in\mathbb{Q/Z}$.
It follows easily from the Mayer-Vietoris theorem and duality that if $M$ 
embeds in $\mathbb{R}^4$ then $\ell_M$ is hyperbolic.
(If $X$ and $Y$ are the closures of the components of $\mathbb{R}^4-M$ and 
$T_X$ and $T_Y$ are the kernels of the induced homomorphisms from 
$T(M)$ to $H_1(X;\mathbb{Z})$ and $H_1(Y;\mathbb{Z})$ (respectively) then 
$T(M)\cong T_X\oplus T_Y$ and the restriction of $\ell_M$ 
to each of these summands is trivial \cite{KK}).

The linking pairing has a dual formulation, in terms of cohomology.
Let $\beta_{\mathbb{Q}/\mathbb{Z}}:H^1(M;\mathbb{Q}/\mathbb{Z})\to
{H^2(M;\mathbb{Z})}$ be the Bockstein homomorphism associated
with the coefficient sequence
\[0\to\mathbb{Z}\to\mathbb{Q}\to\mathbb{Q}/\mathbb{Z}\to0,\]
and let $D:H_1(M;\mathbb{Z})\to{H^2(M;\mathbb{Z})}$ 
be the Poincar\'e duality isomorphism.
Then $\ell_M$ may be given by the equation
\[\ell(w,z)=(D(w)\cup\beta_{\mathbb{Q}/\mathbb{Z}}^{-1}D(z))([M])
\in\mathbb{Q}/\mathbb{Z}.\]
The cup-product and Bockstein structure on $H^*(M;\mathbb{F}_p)$
for $M$ a Seifert manifold have been computed in \cite{BZ},
and this approach was used in \cite{BD} to show that pairings on
abelian groups of odd order may be realized by Seifert fibred 
$\mathbb{Q}$-homology spheres.

\section{the torsion subgroup}

Assume now that $M=M(g;S)$ is a Seifert manifold with Seifert data 
$S=((\alpha_1,\beta_1),\dots,(\alpha_r,\beta_r))$,
where $r\geq1$ and $\alpha_i>1$ for all $i\leq{r}$. 
Let $N_i$ be a torus neighborhood of the $i^{th}$ exceptional fibre,
and let $B_o$ be a section of the restriction of the
Seifert fibration to ${M\setminus\cup{intN_i}}$.
Then $B_o$ is homeomorphic to the surface of genus $g$ 
with $r$ open 2-discs deleted.
Let $\xi_i$ and $\theta_i$ be simple closed curves on $\partial{N_i}$,
corrresponding to the $i$th boundary component of $B_o$ 
and a regular fibre on $N_i$, respectively.
The fibres are naturally oriented, as orbits of the $S^1$-action.
We may assume that $B_o$ is oriented so that regular fibres 
have negative intersection with $B_o$,
and the $\xi_i$ are oriented compatibly with $\partial B_o=\Sigma\xi_i$.
Then there are 2-discs $D_i$ in $N_i$ such that 
$\partial D_i=\alpha_i \xi_i+\beta_i\theta_i$,
since $\alpha_iq_i+\beta_ih=0$ in $H_1(N_i;\mathbb{Z})$.
Hence $H_1(M;\mathbb{Z})\cong\mathbb{Z}^{2g}\oplus{H}$, 
where $H$ has a presentation
\[\langle
{q_1,\dots,q_r,h}\mid\Sigma{q_i}=0,\,\alpha_iq_i+\beta_ih=0,\,
\forall{i\geq1}\rangle.\]
(Here $q_i$ represents the image of $\xi_i$, and
$h$ represents the image of regular fibres such as $\theta_i$.)
The torsion subgroup $T(M)$ is a subgroup of $H$.
Let $\varepsilon_S=-\Sigma\frac{\beta_i}{\alpha_i}$
be the generalized Euler invariant of the Seifert fibration.

We shall modify this presentation to obtain one 
with more convenient generators.
Our approach involves localizing at a prime $p$.
After reordering the Seifert data, if necessary, 
we may assume that $\alpha_{i+1}$ divides $\alpha_i$
in $\mathbb{Z}_{(p)}$, for all $i\geq1$.
(Note that $v=\alpha_1\varepsilon_S$ is then in $\mathbb{Z}_{(p)}$,
while if $\varepsilon_S=0$ then $\frac{\alpha_1}{\alpha_2}$
is invertible in $\mathbb{Z}_{(p)}$.)
Localization loses nothing, since $\ell_M$ is uniquely 
the orthogonal sum of pairings on the $p$-primary summands of $T(M)$.
(We shall often write $\ell_M$ rather than $\mathbb{Z}_{(p)}\otimes\ell_M$,
for simplicity of notation.)

Using the relation $\Sigma{q_i}=0$ to eliminate the generator $q_1$,
we see that $\mathbb{Z}_{(p)}\otimes{H}$ has the equivalent presentation
\[\langle
{q_2,\dots,q_r,h}\mid\alpha_1\varepsilon_S{h}=0,\,\alpha_iq_i+\beta_ih=0,\,
\forall{i\geq2}\rangle.\]
If $r=1$ this group is cyclic, generated by the image of $h$.
We shall asume henceforth that $r\geq2$,
since all pairings on cyclic groups are
realizable by such Seifert manifolds.
Then there are integers
$m,n$ such that $m\alpha_2+n\beta_2=1$,
since $\alpha_2$ and $\beta_2$ are relatively prime.
Let $\gamma_i=\frac{\alpha_2}{\alpha_i}\beta_i$ and
$q_i'=\gamma_2q_i-\gamma_iq_2$, for all $i$.
(Then $q_2'=0$.)
Let $s=-mh+nq_2$ and $t=\alpha_2q_2+\beta_2h$. 
Then $h=-\alpha_2s+nt$ and $q_2=\beta_2s+mt$.
Since $t=0$ in $H$ this simplifies to
\[\langle
{q_3',\dots,q_r',s}\mid\alpha_1\alpha_2\varepsilon_Ss=0,
\,\alpha_iq_i'=0,\,\forall{i\geq3}\rangle.\]
In particular, $M(0;S)$ is a $\mathbb{Q}$-homology sphere (i.e., $H=T(M)$)
if and only if $\varepsilon_S\not=0$. 

If exactly $r_p$ of the cone point orders $\alpha_i$ 
are divisible by $p$ and $\varepsilon_S=0$ then 
$T(M)$ has nontrivial $p$-torsion if and only if $r_p\geq3$, 
in which case $\mathbb{Z}_{(p)}\otimes{T(M)}$ is the direct sum 
of $r_p-2$ nontrivial cyclic submodules,
while if $\varepsilon_S\not=0$ and $r_p\geq2$ then 
$\mathbb{Z}_{(p)}\otimes{T(M)}$ is the direct sum 
of $r_p-1$ nontrivial cyclic submodules.
(Note however that if $r_p\leq1$ and $\varepsilon_S\not=0$ then 
$\mathbb{Z}_{(p)}\otimes{T(M)}\cong
\mathbb{Z}_{(p)}/\alpha_1\varepsilon_S\mathbb{Z}_{(p)}$,
and may be non-trivial.)

\section{the linking pairing}

The Seifert structure gives natural 2-chains relating the 1-cycles
representing the generators of $H$.
We may choose disjoint annuli $A_i$ in $M^*$
with $\partial{A_i}=\theta_2-\theta_i$, for $i\not=2$.
For convenience in our formulae, we shall also let $A_2=0$.
Then 
\[C_i=\beta_2D_i-\beta_iD_2+\beta_2\beta_i{A_i}\]
is a singular 2-chain with 
$\partial{C_i}=\alpha_i\beta_2\xi_i-\alpha_2\beta_i\xi_2$.

Let $\xi_i'=\gamma_2\xi_i-\gamma_i\xi_2$, for $i\geq3$,
$\sigma=-m\theta_2+n\xi_2$ and 
\[
U=-\alpha_1B_o+\alpha_1\varepsilon_SnD_2
+\Sigma\frac{\alpha_1}{\alpha_i}(D_i+\beta_iA_i).
\]
Then $\xi_i'$ is a singular 1-chain 
representing $q_i'$ and $\partial{C_i}=\alpha_i\xi_i'$, for all $i\geq3$,
$\sigma$ is a singular 1-chain representing $s$ 
and $U$ is a singular 2-chain with 
$\partial{U}=\alpha_1\alpha_2\varepsilon_S\sigma$.

In order to calculate intersections and self-intersections 
of the 1-cycles $\xi_i$ with the 2-chains $C_i$ and $U$ in $M$, 
we may push each $\xi_i$ off $N_i$ and $B_o$.
Then $\xi_i$ and $D_j$ are disjoint, for all $i,j$,
while $\xi_2\bullet{A_i}=1$, $\xi_i\bullet{A_i}=-1$
and $\xi_j\bullet{A_i}=0$, if $i,j\not=2$ and $j\not=i$.
Similarly, we may assume that $\theta_2$ is disjoint from the discs $D_j$
(for all $j$) and the annuli $A_k$ (for all $k\not=2$).
We also have $\theta_2\bullet{B_o}=-1$, by the orientation conventions of \S2.
Hence
\[\xi_i'\bullet{C_i}=-\beta_2\beta_i(\gamma_2+\gamma_i),\]
\[\xi_i'\bullet{C_j}=-\beta_2\beta_j\gamma_i,\]
and
\[\xi_i'\bullet{U}=\alpha_1\varepsilon_S\gamma_i\]
for all $i,j\geq3$ with $j\not=i$, while
\[\sigma\bullet{C_i}=n\beta_2\beta_i\]
for all $i\geq3$ and
\[\sigma\bullet{U}=-\frac{\alpha_1}{\alpha_2}-n\alpha_1\varepsilon_S.
\]
Then
\[\ell_M(q_i',q_i')=
[-\beta_2\beta_i\frac{\alpha_i\beta_2+\alpha_2\beta_i}{\alpha_i^2}]
\in\mathbb{Q}/\mathbb{Z}\]
and
\[\ell_M(q_i',q_j')=[-\beta_2\beta_i\beta_j\frac{\alpha_2}{\alpha_i\alpha_j}]
\in\mathbb{Q}/\mathbb{Z}.\]

If $\varepsilon_S\not=0$ then the above calculations of
$\xi_i'\bullet{U}$ and $\sigma\bullet{C_i}$ each give
\[\ell_M(s,q_i')=[\frac{\beta_i}{\alpha_i}]\in\mathbb{Q}/\mathbb{Z},\]
while
\[\ell_M(s,s)=[-\frac{\alpha_1+n\alpha_1\alpha_2\varepsilon_S}
{\alpha_1\alpha_2^2\varepsilon_S}]
\in\mathbb{Q}/\mathbb{Z}.\]
In particular, the linking pairings depend only on $S$ and not on $g$.
(We could arrange that the denominators are powers of $p$,
after further rescaling the basis elements.
However that would tend to obscure the dependence on the Seifert data.)

Let $S$ and $S'$ be two systems of Seifert data, with
concatenation $S''$, and let $M''=M(g+g';S'')$
be the fibre-sum of $M=M(g;S)$ and $M'=M(g';S')$.
Then $\varepsilon_{S''}=\varepsilon_S+\varepsilon_{S'}$.
The next result is clear.

\begin{lemma}
Let $M=M(g;S)$ and $M'=M(g';S')$ be Seifert manifolds such that
all the cone point orders of $S'$ are relatively prime 
to all the cone point orders of $S$ and $\varepsilon_{S'}=\varepsilon_S=0$,
and let $M''=M(g+g';S'')$.
Then $\varepsilon_{S''}=0$ and $\ell_{M''}=\ell_M\perp\ell_{M'}$.
\qed
\end{lemma}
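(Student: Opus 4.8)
The plan is to reduce the assertion to a prime-by-prime check, exploiting the fact recalled in \S1 that every linking pairing is canonically the orthogonal sum, over primes $p$, of its restrictions $\mathbb{Z}_{(p)}\otimes\ell$ to the $p$-primary summands of the torsion. The statement about the Euler invariant is immediate from the additivity remark just above the lemma: since $\varepsilon_{S''}=\varepsilon_S+\varepsilon_{S'}$ and both summands vanish, $\varepsilon_{S''}=0$, so $M''$ is again one of the manifolds to which the computations of \S\S2--3 apply.

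For the pairing I would fix a prime $p$ and use the coprimality hypothesis, which says that no prime divides both a cone point order of $S$ and one of $S'$. Hence at most one of the two data sets has cone point orders divisible by $p$; say, by symmetry, that $p$ divides some $\alpha_i$ coming from $S$, so that every cone point order of $S'$ is a unit in $\mathbb{Z}_{(p)}$. Then the number of cone point orders divisible by $p$ is the same for $S''$ as for $S$, and by the summand count established at the end of \S2 the modules $\mathbb{Z}_{(p)}\otimes T(M'')$ and $\mathbb{Z}_{(p)}\otimes T(M)$ have the same number of nontrivial cyclic summands, while $\mathbb{Z}_{(p)}\otimes T(M')=0$ (as $\varepsilon_{S'}=0$ and $r_p=0$ for $S'$).

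The main step is to match generators and pairing values. After the reordering of \S2 applied to $S''$, the $p$-divisible cone points come first, and these are exactly the $p$-divisible cone points of $S$ in their original relative order; the unit cone points of $S'$ occur later and contribute the zero classes, since their relations $\alpha_iq_i'=0$ have coefficient a unit of $\mathbb{Z}_{(p)}$. Consequently the distinguished second cone point and the torsion generators $q_i'$ for $M''$ coincide with those for $M$, with the same $(\alpha_2,\beta_2)$ and $(\alpha_i,\beta_i)$. As the explicit formulas of \S3 for $\ell(q_i',q_i')$ and $\ell(q_i',q_j')$ involve only these data (they never reference $\alpha_1$, and the relevant $\alpha_2,\alpha_i,\alpha_j$ are all $p$-divisible once there is nontrivial $p$-torsion, i.e. $r_p\geq3$), we obtain $\mathbb{Z}_{(p)}\otimes\ell_{M''}=\mathbb{Z}_{(p)}\otimes\ell_M$. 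Since the $S'$-factor is trivial at $p$, this equals $(\mathbb{Z}_{(p)}\otimes\ell_M)\perp(\mathbb{Z}_{(p)}\otimes\ell_{M'})$; the symmetric case where $p$ divides orders of $S'$, and the case where $p$ divides neither, are handled identically. Assembling over all $p$ then yields $T(M'')\cong T(M)\oplus T(M')$ and $\ell_{M''}=\ell_M\perp\ell_{M'}$.

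I expect the only delicate point to be the bookkeeping of the reordering: one must check that the index-$2$ cone point and the torsion generators feeding the \S3 formulas are literally the same for $M$ and for $M''$, so that the pairing values agree on the nose rather than merely up to an isomorphism. Given coprimality this is routine, because the $p$-local torsion and its pairing are governed entirely by a single one of the two data sets.
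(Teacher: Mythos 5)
Your proof is correct and follows the route the paper intends: the paper offers no written argument for this lemma (it is stated as ``clear'' immediately after the localization setup of \S\S2--3), and your prime-by-prime reduction --- using coprimality to see that only one data set contributes $p$-torsion, and the \S3 formulas (which involve only $(\alpha_2,\beta_2)$ and $(\alpha_i,\beta_i)$, never $\alpha_1$) to see that the $p$-local pairing values for $M''$ literally coincide with those for $M$ --- is exactly the fleshing-out of that claim. Nothing is missing.
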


Thus if every $p$-primary summand of a linking pairing $\ell$ 
can be realized by some $M(0;S)$ with all cone point orders powers of $p$
and $\varepsilon_S=0$ then $\ell$ can also be realized by a Seifert manifold.

If one of the hypotheses fails,
it is not clear how the linking pairings of $M,M'$ and $M''$ are related.
In order to realize pairings by Seifert manifolds with $\varepsilon_S\not=0$
we shall need another approach.

\section{the homogeneous case: $p$ odd}

In this section we shall show that when $p$ is odd and
the $p$-primary component of $T(M)$ is homogeneous 
the structure of the $p$-primary component of $\ell_M$
may be read off the Seifert data.
Our results shall later be extended to the inhomogeneous cases.

Let $u_i=p^{-k}\alpha_i$, for $1\leq{i}\leq{r_p}$.

\begin{lemma} 
Let $M=M(g;S)$ be a Seifert manifold and $p$ a prime.
Then $\mathbb{Z}_{(p)}\otimes{T(M)}$ is homogeneous 
of exponent $p^k$ if and only if either
\begin{enumerate}
\item $\varepsilon_S=0$ and $u_i=p^{-k}\alpha_i$ is invertible 
in $\mathbb{Z}_{(p)}$, for $1\leq{i}\leq{r_p}$; or
\item $p^{-k}\alpha_1\varepsilon_S$ and $u_i$ are
invertible in $\mathbb{Z}_{(p)}$, for $2\leq{i}\leq{r_p}$; or
\item $r_p\leq2$ and $p^{-k}\alpha_1\alpha_2\varepsilon_S$ 
is invertible in $\mathbb{Z}_{(p)}$.
\end{enumerate}
\end{lemma}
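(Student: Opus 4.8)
The plan is to read the cyclic decomposition of $\mathbb{Z}_{(p)}\otimes T(M)$ directly off the reduced presentation
\[
\langle q_3',\dots,q_r',s \mid \alpha_1\alpha_2\varepsilon_S\,s=0,\ \alpha_iq_i'=0,\ i\geq3\rangle
\]
obtained in \S2, and then to impose that every nontrivial cyclic summand have order exactly $p^k$. First I would discard the generators $q_i'$ with $i>r_p$: for these $\alpha_i$ is a unit in $\mathbb{Z}_{(p)}$, so the relation $\alpha_iq_i'=0$ forces $q_i'=0$ and they drop out. The surviving relations are diagonal, so the presentation already displays $\mathbb{Z}_{(p)}\otimes H$ as the direct sum of the cyclic modules $\mathbb{Z}_{(p)}/(\alpha_i)$ for $3\leq i\leq r_p$ together with the module $\mathbb{Z}_{(p)}/(\alpha_1\alpha_2\varepsilon_S)$ carried by $s$. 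No further change of basis is needed to exhibit the cyclic decomposition, and this is what secures necessity as well as sufficiency: the summands cannot merge, so homogeneity of exponent $p^k$ holds precisely when each nontrivial summand has order $p^k$. Their orders are governed by the valuations $v_p(\alpha_i)$ for $3\leq i\leq r_p$ and by $v_p(\alpha_1\alpha_2\varepsilon_S)$.

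The case division then falls out according to whether $\varepsilon_S$ vanishes and how large $r_p$ is. If $\varepsilon_S=0$ the generator $s$ is free, contributes to the rank rather than to $T(M)$, and the torsion is $\bigoplus_{i=3}^{r_p}\mathbb{Z}_{(p)}/(\alpha_i)$; homogeneity is then exactly the invertibility of the $u_i$ attached to the surviving generators (which in particular forces $r_p\geq3$), giving case (1). If $\varepsilon_S\neq0$ and $r_p\geq3$ the summand carried by $s$ is now torsion, of order $p^{v_p(\alpha_1\alpha_2\varepsilon_S)}$, so homogeneity requires both that the surviving $u_i$ be units and that $v_p(\alpha_1\alpha_2\varepsilon_S)=k$; this is case (2). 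Finally, if $r_p\leq2$ the summands $q_i'$ are absent and only the $s$-summand survives, so homogeneity reduces to $v_p(\alpha_1\alpha_2\varepsilon_S)=k$, which is case (3); here one also checks that for $r_p\leq1$ the unit factor $\alpha_2$ may be dropped, recovering the formula $\mathbb{Z}_{(p)}/(\alpha_1\varepsilon_S)$ recorded in \S2.

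Throughout I would keep the reordering convention of \S2 in force, so that $v_p(\alpha_1)\geq v_p(\alpha_2)\geq\cdots$ and $\alpha_1\varepsilon_S\in\mathbb{Z}_{(p)}$; this is what guarantees $\alpha_1/\alpha_i\in\mathbb{Z}_{(p)}$ in the reduction and that the displayed relations are the genuine ones. The main obstacle I anticipate is not a single hard step but the bookkeeping by which the cone points $\alpha_1$ and $\alpha_2$ enter \emph{only} through the combination $\alpha_1\alpha_2\varepsilon_S$, never through summands of their own: pinning down the exact order of the Euler-number summand carried by $s$, and matching it against the invertibility conditions (1)--(3) together with the correct index range for the $u_i$, is the delicate part. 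Once the orders of all summands are fixed as above, and the boundary cases ($\varepsilon_S=0$ with small $r_p$, or $\varepsilon_S\neq0$ with $r_p\leq1$) are folded in, the three equivalences are immediate.
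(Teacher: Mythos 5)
You take the same route as the paper: both proofs read the lemma off the diagonal presentation $\langle q_3',\dots,q_r',s\mid \alpha_1\alpha_2\varepsilon_S s=0,\ \alpha_iq_i'=0\ (i\geq3)\rangle$ of \S2, so that $\mathbb{Z}_{(p)}\otimes{H}\cong\mathbb{Z}_{(p)}/(\alpha_1\alpha_2\varepsilon_S)\oplus\bigoplus_{i=3}^{r_p}\mathbb{Z}_{(p)}/(\alpha_i)$, and homogeneity says every nontrivial summand has order exactly $p^k$. That much of your argument, including the $r_p\leq2$ and $r_p\leq1$ bookkeeping, is correct. The genuine gap is the step you yourself call ``the delicate part'' and then never carry out: translating the condition you actually derive into conditions (1) and (2) as stated, which constrain $\alpha_1$ and $\alpha_2$. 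In case (1) you prove homogeneity is equivalent to $u_i$ being a unit for $3\leq i\leq r_p$; condition (1) demands it for $1\leq i\leq r_p$, and when $\varepsilon_S=0$ the orders $\alpha_1,\alpha_2$ do not enter the torsion at all, so extra input is needed. This is exactly the paper's first observation: if $\varepsilon_S=0$ and $p\mid\alpha_1$, then $\beta_1=-\Sigma_{i\geq2}(\alpha_1/\alpha_i)\beta_i$ is a unit, so some $\alpha_1/\alpha_i$ is a unit, whence $\alpha_1$ and $\alpha_2$ have equal $p$-adic valuation. The paper's second observation (for $r_p>2$ both $u_2$ and $v=\alpha_1\varepsilon_S$ lie in $\mathbb{Z}_{(p)}$, so if their product $u_2v=p^{-k}\alpha_1\alpha_2\varepsilon_S$ is a unit then each factor is a unit) is the corresponding bridge for case (2); you invoke neither.

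Moreover, your claim that your derived condition ``is case (2)'' is false for the statement as printed, and attempting the identification would have revealed this. Printed condition (2) makes $p^{-k}\alpha_1\varepsilon_S$ a unit; together with $u_2$ a unit this forces the $s$-summand $\mathbb{Z}_{(p)}/(\alpha_1\alpha_2\varepsilon_S)$ to have order $p^{2k}$, not $p^k$: for instance $S=((3,1),(3,1),(3,1))$ satisfies printed (2) with $k=1$, yet $T(M)\cong\mathbb{Z}/9\mathbb{Z}\oplus\mathbb{Z}/3\mathbb{Z}$ is inhomogeneous. What your computation actually yields --- $\alpha_1\varepsilon_S$ a unit and $u_i$ a unit for $2\leq i\leq r_p$ --- is the correct condition, and is the one used in the paper's proof and again in Theorem 3; the printed $p^{-k}\alpha_1\varepsilon_S$ is evidently a slip for $\alpha_1\varepsilon_S$. (Printed condition (1) has a similar defect on the necessity side: $S=((9,1),(9,2),(3,-1))$ has $\varepsilon_S=0$ and $T(M)\cong\mathbb{Z}/3\mathbb{Z}$, homogeneous, although $u_1,u_2$ are not units.) So: right decomposition, same strategy as the paper, but the matching of your conditions to (1)--(3), which is the only content of the lemma beyond \S2, is asserted rather than proved, and for case (2) it cannot be proved without first correcting the statement.
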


\begin{proof}
This follows immediately from the calculations in \S2,
with the folowing observations.
If $\varepsilon_S=0$ then $\alpha_1$ and $\alpha_2$ must 
have the same $p$-adic valuation.
If $r_p>2$ then $u_2=p^{-k}\alpha_2$ and $v=\alpha_1\varepsilon_S$ are 
in $\mathbb{Z}_{(p)}$.
Hence if $u_2v=p^{-k}\alpha_1\alpha_2\varepsilon_S$
is invertible in $\mathbb{Z}_{(p)}$
then $u_2$ and $v$ are also invertible in $\mathbb{Z}_{(p)}$.
\end{proof}

Note that if $\mathbb{Z}_{(p)}\otimes{T(M)}$ is homogeneous 
of exponent $p^k$ and $\varepsilon_S\not=0$ then $u_1$ 
may be divisible by $p$.

\begin{theorem}
Let $M=M(g;S)$ be a Seifert manifold and $p$ an odd prime such that 
$\mathbb{Z}_{(p)}\otimes{T(M)}$ is homogeneous of exponent $p^k$.
Then
\begin{enumerate}
\item{if} $\varepsilon_S=0$ then 
$d(\ell_M)=[(-1)^{r_p-1}\frac{\alpha_1}{\alpha_2}
(\Pi_{1\leq{i}\leq{r}}\beta_i)(\Pi_{3\leq{j}\leq{r}}u_j)]$;

\item{if} $\varepsilon_S\not=0$ then
$d(\ell_M)=
[(-1)^{r_p-1}(\Pi_{1\leq{i}\leq{r_p}}\beta_i)(\Pi_{2\leq{j}\leq{r_p}}u_j)v].$
\end{enumerate}
\end{theorem}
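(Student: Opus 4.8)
The plan is to invoke the classification recalled in \S1: for odd $p$, a homogeneous linking pairing of exponent $p^k$ is determined by its rank and by $d(\ell_M)=[\det L]\in\mathbb{F}_p^\times/(\mathbb{F}_p^\times)^2$, where $L$ has $(i,j)$ entry $p^k\ell_M(e_i,e_j)$ for a chosen basis. So the whole computation reduces to evaluating one determinant modulo $p$ and modulo squares. First I would fix the basis supplied by \S2. In case (1), where $\varepsilon_S=0$, the element $s$ spans a free summand and the surviving torsion basis is $q_3',\dots,q_{r_p}'$ (for $i>r_p$ the order $\alpha_i$ is a unit in $\mathbb{Z}_{(p)}$, so $\alpha_iq_i'=0$ forces $q_i'=0$); this gives the correct rank $r_p-2$. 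In case (2) one adjoins $s$, giving rank $r_p-1$. The matrix entries are then read directly off the formulae of \S3.

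For case (1), writing $c_i=\beta_i/u_i\in\mathbb{Z}_{(p)}$ and substituting $\alpha_i=p^ku_i$ into the formulae for $\ell_M(q_i',q_j')$, I obtain $L_{ij}=-\beta_2u_2\,c_ic_j-\delta_{ij}\beta_2^2c_i$, that is $L=-\beta_2\bigl(u_2\,cc^{T}+\beta_2\,\mathrm{diag}(c_i)\bigr)$ with $c=(c_3,\dots,c_{r_p})^{T}$. This is a rank-one perturbation of a diagonal matrix, so the matrix-determinant lemma gives $\det L=(-\beta_2)^{r_p-2}\beta_2^{\,r_p-2}\bigl(\prod_{i\geq3}c_i\bigr)\bigl(1+\tfrac{u_2}{\beta_2}\sum_{i\geq3}c_i\bigr)$. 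The decisive step is that $\varepsilon_S=0$ makes $\sum_{i=1}^{r_p}\beta_i/u_i=p^k\bigl(\tfrac1{p^k}\sum\beta_i/\alpha_i\bigr)\equiv0\pmod{p^k}$, so $\sum_{i\geq3}c_i\equiv-c_1-c_2\pmod p$ and the perturbation factor collapses mod $p$ to $-u_2\beta_1/(\beta_2u_1)$. Gathering the surviving factors, using $\alpha_1/\alpha_2=u_1/u_2$, and replacing each $x^{-1}$ by $x$ modulo squares, yields the formula of part (1); note that only the indices $i\leq r_p$ survive the reduction mod $p$.

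Case (2) proceeds the same way but with the extra $s$-row and $s$-column adjoined, whose off-diagonal entries are $p^k\ell_M(s,q_i')=c_i$ and whose corner is $p^k\ell_M(s,s)$. Here I would evaluate $\det L$ by a Schur-complement expansion on the $s$-entry (equivalently, by clearing the $s$-column against the nearly-diagonal $q$-block), so that the Euler invariant enters only at the very end, through the combination $v=\alpha_1\varepsilon_S$. I expect the $p$-adic bookkeeping to be the main obstacle, on three fronts: verifying that each $p^k\ell_M(e_i,e_j)$ genuinely lies in $\mathbb{Z}_{(p)}$ and pinning down its residue mod $p$; coping with the fact that when $\varepsilon_S\neq0$ the quantity $u_1=p^{-k}\alpha_1$ need \emph{not} be a unit, so $\alpha_1$ must be carried as $v/\varepsilon_S$ and the honest unit $v$ substituted for $u_1$ wherever $u_1$ would otherwise be singular; and arranging the algebra so that the defining relation for the Euler invariant (either $\varepsilon_S=0$, or invertibility of $p^{-k}v$) is the last thing used, after which the determinant is a bare product and its image in $\mathbb{F}_p^\times/(\mathbb{F}_p^\times)^2$ is immediate.
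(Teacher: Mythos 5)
Your case (1) is complete, correct, and essentially the paper's own computation in a tidier package: where the paper triangularizes $L$ by explicit row operations, you observe that $L=-\beta_2\bigl(u_2\,cc^{T}+\beta_2\,\mathrm{diag}(c_i)\bigr)$ is a rank-one perturbation of a diagonal matrix and apply the matrix-determinant lemma; both arguments then turn on the same identity, namely that $\varepsilon_S=0$ forces $\sum_{1\le i\le r_p}\beta_i/u_i\equiv0\pmod{p^k}$, so that $1+\tfrac{u_2}{\beta_2}\Sigma^*\equiv-u_2\beta_1/(\beta_2u_1)\pmod p$. (Your displayed justification of that congruence is garbled — the sum that equals $-p^k\varepsilon_S$ runs over \emph{all} $i\le r$, and the terms with $i>r_p$ contribute multiples of $p^k$ — but the congruence you extract is right. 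Also your reading of the products in (1) as running to $r_p$ rather than $r$ is the correct interpretation of the statement.)

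Case (2), however, is a genuine gap: what you give is a plan, not a proof. Every step that carries the content of part (2) is deferred: evaluating the bordered determinant, invoking $n\beta_2\equiv1\pmod p$ to kill the $(1-n\beta_2)$ terms, substituting the relation $v\equiv-\beta_1-\tfrac{u_1\beta_2}{u_2}-u_1\Sigma^*\pmod p$ coming from the definition of $\varepsilon_S$, and reducing modulo squares. Moreover, you should know that if you execute your Schur-complement plan faithfully you will \emph{not} arrive at the stated formula: the computation gives $\det L\equiv(-1)^{r_p-2}\beta_2^{2(r_p-3)}\bigl(\prod_{i\ge3}c_i\bigr)\bigl(\beta_2d^*(\beta_2+u_2\Sigma^*)+\Sigma^*\bigr)$, the bracketed factor reduces mod $p$ to $\beta_1\beta_2u_2/(u_2^2v)$, and hence $d(\ell_M)=[(-1)^{r_p}(\Pi_{1\le i\le r_p}\beta_i)(\Pi_{2\le j\le r_p}u_j)v]$, which differs from the statement by $[-1]$. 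A concrete check: for $S=((3,1),(3,1),(3,2))$ one has $\varepsilon_S=-\tfrac43$, $v=-4$, $r_p=3$, and the matrix of the $3$-primary pairing in the basis $q_3',s$ is congruent to $\left(\begin{smallmatrix}0&2\\2&0\end{smallmatrix}\right)$ mod $3$, which is hyperbolic with $d=[-1]$; the stated formula gives $[(-1)^{2}\cdot2\cdot(-4)]=[1]$. The discrepancy originates in the paper's own proof, which, in passing from its first displayed expression for $\det(L)$ to the second, flips the sign of the term $\beta_2d^*(\beta_2+\mu\Sigma^*)$ but not of $\Sigma^*$ while simultaneously absorbing the overall minus sign into $(-1)^{r_p-3}$. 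So to finish case (2) you must actually do the computation, and then either flag that the correct sign relative to the intersection formulae of \S3 is $(-1)^{r_p}$, or locate a compensating sign elsewhere; you cannot expect honest algebra to reproduce $(-1)^{r_p-1}$ as printed.
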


\begin{proof}
Suppose first that $\varepsilon_S=0$.
Then $\frac{\alpha_1}{\alpha_2}=\frac{u_1}{u_2}$ 
is also invertible in $\mathbb{Z}_{(p)}$,
and $\mathbb{Z}_{(p)}\otimes{T(M)}\cong(\mathbb{Z}/p^k\mathbb{Z})^{r_p-2}$,
with basis $e_i=q_{i+2}'$, for $1\leq{i}\leq{r_p-2}$.
We apply row operations
\[row_i\mapsto{row_i-\frac{\alpha_3\beta_{i+2}}{\alpha_{i+2}\beta_3}row_1}\] 
for $2\leq{i}\leq{n}$, and then
\[row_1\mapsto{row_1-\frac{\alpha_2\beta_3}{\alpha_3\beta_2}\Sigma_{i\geq2}row_i}.\] 
This gives a lower triangular matrix with diagonal
\[[-\beta_2\frac{\beta_3}{u_3}(\beta_2+\mu\Sigma\frac{\beta_i}{u_i}),
-\beta_2^2\frac{\beta_4}{u_4},\dots,-\beta_2^2\frac{\beta_{r_p-2}}{u_{r_p-2}}].\]
Therefore 
\[\mathrm{det}(L)=(-1)^n\beta_2^{2n-1}
(\beta_2+\mu\Sigma\frac{\beta_i}{u_i})\Pi\frac{\beta_i}{u_i},\]
and so 
\[d(\ell_M)=[(-1)^{r_p-1}\frac{\alpha_1}{\alpha_2}
(\Pi_{1\leq{i}\leq{r}}\beta_i)(\Pi_{3\leq{j}\leq{r}}u_j)].
\]

A similar argument applies if $\varepsilon_S\not=0$.
In this case $u_2$ and $v=\alpha_1\varepsilon_S$ 
are also invertible in $\mathbb{Z}_{(p)}$,
and $\mathbb{Z}_{(p)}\otimes{T(M)}\cong(\mathbb{Z}/p^k\mathbb{Z})^{r_p-1}$,
with basis $e_i=q_{i+2}'$, for $1\leq{i}\leq{r_p-2}$, and ${e_{r_p-1}=s}$.
We now have $n=r_p-1$ and the final row and column of the matrix $L$ 
have a slightly different form.
If we perform the same row operations on the first $n-1$ rows,
and then the column operation 
$col_1\mapsto{col_1+\Sigma_{2\leq{i}\leq{r_p-2}}col_i}$
we obtain a bordered matrix
\[
\left( 
\begin{matrix}
-\beta_2\frac{\beta_3}{u_3}(\beta_2+\mu\Sigma^*) & 0 &\hdots & 
0 &\frac{\beta_3}{u_3}\\
0 & -\beta_2^2\frac{\beta_4}{u_4}  &\hdots &0 &0\\
\vdots & 0 & \ddots &\vdots   \\
0&\hdots&0&-\beta_2^2\frac{\beta_{r_p}}{u_{r_p}}&0\\
\Sigma^* & \frac{\beta_4}{u_4} &\hdots  & \frac{\beta_{r_p}}{u_{r_p}}& d^*\\
\end{matrix}
\right),
\]
where $\Sigma^*=\Sigma_{3\leq{i}\leq{r_p}}\frac{\beta_i}{u_i}$
and
$d^*=-\frac{u_1+nu_2v}{u_2^2v}$.
Hence 
\[\mathrm{det}(L)=-(\beta_2\frac{\beta_3}{u_3}(\beta_2+\mu\Sigma^*)d^*+
\frac{\beta_3}{u_3}\Sigma^*)(-1)^{r_p-3}\beta_2^{2(r_p-3)}\Pi_{4\leq{i}\leq{r_p}}
\frac{\beta_i}{u_i}
\]
\[=(-\beta_2d^*(\beta_2+\mu\Sigma^*)+\Sigma^*)
(-1)^{r_p-2}\beta_2^{2(r_p-3)}\Pi_{3\leq{i}\leq{r_p}}\frac{\beta_i}{u_i}.
\]
Now 
$(-\beta_2d^*(\beta_2+\mu\Sigma^*)+\Sigma^*)=$
\[=-(\beta_2(u_1+nu_2v)(-\beta_2+u_2\Sigma^*)-u_2^2v\Sigma^*)/u_2^2v\]
\[=(\beta_2(u_1\beta_2+u_1u_2\Sigma^*+n\beta_2u_2v)+
(n\beta_2-1)u^2v\Sigma^*)/u_2^2v\]
\[\equiv\beta_2(u_1\beta_2+u_1u_2\Sigma^*+u_2v)\quad{mod}~(p)\]
\[\equiv-\beta_1\beta_2u_2\quad{mod}~(p),\]
since $n\beta_2\equiv1$ {\it mod} $(p)$ and
$v=-\beta_1-\frac{\beta_2u_1}{u_2}-u_1\Sigma^*$.
Therefore
\[\mathrm{det}(L)\equiv(-1)^{r_p-1}\beta_2^{2(r_p-3)}
(\Pi_{1\leq{j}\leq{r_p}}\beta_i)/
(\Pi_{2\leq{j}\leq{r_p}}u_j)v\quad{mod}~(p),\]
and so we now have
\[d(\ell_M)=
[(-1)^{r_p-1}(\Pi_{1\leq{i}\leq{r_p}}\beta_i)(\Pi_{2\leq{j}\leq{r_p}}u_j)v].\]
\end{proof}

When all the cone point orders have the same $p$-adic valuation
(i.e., $u_1$ and $u_2$ are also invertible in $\mathbb{Z}_{(p)}$)
then these formulae for $d(\ell_M)$ are
invariant under permutation of the indices.
For if $\varepsilon_S=0$ then $[\frac{\alpha_1}{\alpha_2}]=[u_1u_2]$
in $\mathbb{F}_p^\times/(\mathbb{F}_p^\times)^2$,
while if  $\varepsilon_S\not=0$ then
$v=u_1p^k\varepsilon_S$ (and $p^k\varepsilon_S$ is also invertible).

A linking pairing $\ell$ on a free $\mathbb{Z}/p^k\mathbb{Z}$-module $N$ 
is hyperbolic if and only if $\rho=rk(\ell)$ is even 
and $d(\ell)=[(-1)^{\frac{\rho}2}]$.
Thus $\mathbb{Z}_{(p)}\otimes\ell_M$ 
is hyperbolic if and only if either $\varepsilon_S=0$, 
$r_p=\rho+2$ is even and 
$[\frac{\alpha_1}{\alpha_2}
(\Pi_{1\leq{i}\leq{r_p}}\beta_i)(\Pi_{3\leq{j}\leq{r_p}}u_j)]
=[(-1)^{\frac{r_p}2-1}]$
or $\varepsilon_S\not=0$, $r_p=\rho+1$ is odd and
$[(\Pi_{1\leq{i}\leq{r_p}}\beta_i)(\Pi_{2\leq{j}\leq{r_p}}u_j)v]=
[(-1)^{\frac{r_p-1}2}].$

\section{realization of pairings on groups of odd order}

In this section we shall show that every linking pairing 
on a finite group of odd order may be realized by a
Seifert manifold $M(0;S)$.

Suppose first that we localize $\ell_M$ at a prime $p$.
Let $p^k$ be the exponent of $\mathbb{Z}_{(p)}\otimes{T(M)}$,
and let $L$ be the $\mathbb{Z}/p^k\mathbb{Z}$-matrix 
with entries $p^k\ell(q_i',q_j')$.
(If $\varepsilon_S\not=0$ we need also a row and column
corresponding to the generator $s$.) 
Then
\[ L=
\left(
\begin{matrix}
D_1& p^{\kappa_2}B_2 & \dots & p^{\kappa_t}B_t\\
p^{\kappa_2}B_2^{tr} & p^{\kappa_2}D_2 &\dots &\vdots\\
\vdots & \dots & \ddots & \vdots\\
p^{\kappa_t}B_t^{tr}& 0& \dots &p^{\kappa_t}D_t
\end{matrix}
\right),
\]
where $D_i$ is a $\rho_i\times\rho_1$ block with
$\mathrm{det}(D_i)\not\equiv0$ {\it mod} $(p)$,
for $1\leq{i}\leq{t}$, and $0<\kappa_2<\dots<\kappa_t<k$. 
We may partition $L$ more coarsely
as $L=\left(\smallmatrix A&B\\
B^{tr}&D\endsmallmatrix\right)$,
where $A=D_1$ and $B=[B_2\dots B_t]$.
Let 
$Q=\left(\smallmatrix I&-D_1^{-1}p^{\kappa_2}{B}\\
0&I\endsmallmatrix\right)$
and $D'=D-p^{\kappa_2}B^{tr}A^{-1}B$.
Then
$Q^{tr}LQ=\left(\smallmatrix A&0\\
0&p^{\kappa_2}{D'}\endsmallmatrix\right)$.
Block-diagonalizing $L$ in this fashion
does not change the residues {\it mod} $(p)$ of the diagonal blocks $D_i$
or decrease the $p$-divisibility of the off-diagonal blocks.
We may iterate this process, and we find that $\ell_M$
is an orthogonal sum of pairings on homogeneous groups 
$(\mathbb{Z}/p^k\mathbb{Z})^{\rho_1},
(\mathbb{Z}/p^{k-\kappa_2}\mathbb{Z})^{\rho_2},
\dots,(\mathbb{Z}/p^{k-\kappa_t}\mathbb{Z})^{\rho_t}$.
If $\varepsilon_S=0$ or if $\alpha_1\varepsilon_S$ is invertible
in $\mathbb{Z}_{(p)}$ then the determinantal invariants 
of the first summand (with the maximal exponent $p^k$) 
may be computed from the block $A$ as in \S4, 
while we may read off the determinantal invariants 
of the other summands from the corresponding diagonal elements 
of the original matrix $L$.
(We shall not need to consider the possibility that $p$ 
divides the numerator of $\varepsilon_S$ in justifying
our constructions below.)

With these reductions in mind, we may now contruct
Seifert manifolds realizing given pairings.

\begin{theorem}
Let $p$ be an odd prime, and let $\ell$ be a linking pairing 
on a finite $p$-primary abelian group.
Then there is a Seifert manifold $M=M(0;S)$ such that 
the cone point orders $\alpha_i$ are all powers of $p$,
$\varepsilon_S=0$ and $\ell_M\cong\ell$.
\end{theorem}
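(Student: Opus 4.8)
The plan is to realize $\ell$ by a single Seifert manifold $M(0;S)$ whose cone point orders are all powers of $p$, reading the entire structure of $\ell$ off the Seifert data. Since $p$ is odd, $\ell$ splits as an orthogonal sum $\ell=\perp_{j=1}^t\ell_j$ of pairings on homogeneous summands $(\mathbb{Z}/p^{k_j}\mathbb{Z})^{\rho_j}$ with $k_1>\dots>k_t>0$, and by the classification recalled in \S1 each $\ell_j$ is determined by its rank $\rho_j$ and its determinantal invariant $d(\ell_j)\in\mathbb{Z}/2\mathbb{Z}$. So it suffices to produce Seifert data whose localized torsion is $\bigoplus_j(\mathbb{Z}/p^{k_j}\mathbb{Z})^{\rho_j}$ and whose localized pairing has the prescribed determinantal invariant on each summand, all with $\varepsilon_S=0$.

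For the group I would take $g=0$ and $r=2+\sum_j\rho_j$ cone points, of orders $\alpha_i=p^{a_i}$: two \emph{balancing} cone points $\alpha_1=\alpha_2=p^{K}$ with $K>k_1$ strictly larger than every exponent occurring in $N$, together with $\rho_j$ cone points of order $p^{k_j}$ for each summand $\ell_j$. Each $\beta_i$ is chosen to be a unit mod $p$, so that $\gcd(\alpha_i,\beta_i)=1$. By the presentation in \S2, once $\varepsilon_S=0$ the generator $s$ becomes free and $\mathbb{Z}_{(p)}\otimes T(M)\cong\bigoplus_{i\geq3}\mathbb{Z}/\alpha_i\mathbb{Z}\cong N$; the two balancing fibres contribute nothing to the torsion.

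The pairing is then computed from the formulae of \S3 together with the block-diagonalization of \S6. The point of taking the balancing fibres of strictly larger order $p^K$ is that it forces every off-diagonal entry $\ell_M(q_i',q_j')$ — both those within a single homogeneous block and those coupling different blocks — to be divisible by $p$, so that, modulo $p$, each diagonal block of $L$ is a diagonal matrix with entries congruent to $-\beta_2^2\beta_i$. Since block-diagonalization preserves the residues mod $p$ of the diagonal blocks, $d(\ell_j)=[(-1)^{\rho_j}\prod_{i\in j}\beta_i]$ for every $j$, with no contribution from $\beta_1$. I can therefore realize any prescribed $d(\ell_j)$ by setting all but one of the $\beta_i$ in the $j$th block equal to $1$ and the last equal to $1$ or to a fixed non-residue $\nu$. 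Finally $\varepsilon_S=-\sum_i\beta_i/\alpha_i=0$ amounts to $\beta_1+\beta_2=-\sum_{i\geq3}\beta_ip^{K-a_i}$, whose right-hand side is divisible by $p^{K-k_1}$; taking $\beta_1=1$ and $\beta_2$ the forced value gives two units, a choice completely decoupled from the determinantal invariants.

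The step I expect to be the main obstacle is controlling all the determinantal invariants simultaneously while enforcing $\varepsilon_S=0$ and the unit conditions on the $\beta_i$. If instead one matched $\alpha_1=\alpha_2=p^{k_1}$ to the top exponent, the top summand would have to be handled by the block-$A$ computation of \S4, and its invariant then involves the product $\beta_1\beta_2$, which is already pinned down by the Euler-number equation; for small primes (for instance $p=3$ with $\rho_1=2$) the residual freedom can collapse and one square class becomes unreachable. Introducing the two balancing fibres of strictly larger order is precisely what removes this entanglement, rendering every block diagonal and making the invariant of each summand depend on a single free unit. Once each $\ell_j$ has the correct rank and determinantal invariant, the odd-$p$ classification gives $\mathbb{Z}_{(p)}\otimes\ell_M\cong\ell$, completing the construction.
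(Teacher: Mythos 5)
Your proposal is correct, but it takes a genuinely different route from the paper. The paper realizes the top homogeneous summand $\ell_1$ by taking $\rho_1+2$ cone points all of order exactly $p^{k_1}$, so that the determinantal invariant of the top block is computed by the row-reduction of \S4 and involves the product of \emph{all} the $\beta_i$ in that block --- including the two ``extra'' ones, which are simultaneously pinned down by the condition $\Sigma\beta_i\equiv0$ needed for $\varepsilon_S=0$. Disentangling the product condition $[\Pi\beta_i]=[w_1]$ from the sum condition then forces the paper into a case analysis on $m_1\bmod 4$, on $p\geq5$ versus $p=3$, and on whether $[w_1]=[1]$, with one case ($p=3$, $\rho_1=2$, $[w_1]=[1]$) that is genuinely unsolvable with equal cone point orders and is handled by the ad hoc data $S=((3^{k+1},1),(3^{k+1},5),(3^k,-1),(3^k,-1))$ --- which is exactly your balancing device. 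Your construction instead makes the two extra cone points have order $p^K$ with $K>k_1$ \emph{always}: then, by the formulae of \S3, every off-diagonal entry $p^{k_m}\ell_M(q_i',q_j')$ within a block is an odd multiple of $p^{K-k_m}$ and hence divisible by $p$, every diagonal entry is a unit congruent to $-\beta_2^2\beta_i$, the inter-block couplings have the extra divisibility needed for the block-diagonalization of \S5 (which you cite as \S6; the odd-$p$ version is in \S5), and so $d(\ell_j)=[(-1)^{\rho_j}\Pi_{i\in j}\beta_i]$ for every $j$, including the top one. Since $\beta_2$ enters only as a square and $\beta_1$ does not enter the formulae for $i,j\geq3$ at all, the Euler-number equation $\beta_1+\beta_2=-\Sigma_{i\geq3}\beta_ip^{K-a_i}$ is absorbed by the balancing pair without touching any invariant, and all case analysis evaporates. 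What the paper's approach buys in exchange is the sharper byproduct recorded after Theorem 5: in the homogeneous case one can usually arrange that \emph{all} cone points have the same order $p^k$, a normalization your manifolds never achieve; what yours buys is uniformity and brevity, at no cost to the theorem as stated, since $p^K$ is still a power of $p$.
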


\begin{proof}
The pairing $\ell$ is the orthogonal sum $\perp_{j=1}^t\ell_j$,
where $\ell_j$ is a pairing on $(\mathbb{Z}/p^{k_j}\mathbb{Z})^{\rho_j}$,
with $\rho_j>0$ for $1\leq{j}\leq{t}$ and 
$0<k_j<k_{j-1}$ for $2\leq{j}\leq{t}$.
Let $d(\ell_j)=[w_j]$ for $1\leq{j}\leq{t}$, and let $k=k_1$.

If $p\geq5$ we let $\alpha_i=p^k$ for $1\leq{i}\leq{m_1=\rho_1+2}$,
$\alpha_i=p^{k_2}$ for $m_1<i\leq{m_2=m_1+\rho_2}$, $\dots$,
and $\alpha_i=p^{k_t}$ for $m_{t-1}<i\leq(\Sigma\rho_j)+2$.
For each $1\leq{j}\leq{t}$ we let $\beta_i=1$ for
$m_j<i<m_{j+1}$ and $\beta_{m_{j+1}}=w_j$.
We must then choose $\beta_i$ for $1\leq{i}\leq{m_1}$ so that
$[\Pi_{1\leq{i}\leq{m_1}}\beta_i]=[w_1]$ and 
$\Sigma_{1\leq{i}\leq{m_1}}\beta_i=
-\Sigma_{m_1<i\leq{r}}p^k\frac{\beta_i}{\alpha_i}$.
It is in fact sufficient to solve the equation
$\Sigma_{1\leq{i}\leq{m_1}}\beta_i=0$ with
all $\beta_i\in(\mathbb{Z}/p^k\mathbb{Z})^\times$,
and $[\Pi_{1\leq{i}\leq{m_1}}\beta_i]=[w_1]$,
for subtracting $\Sigma_{m_1<i\leq{r}}p^k\frac{\beta_i}{\alpha_i}$
from $\beta_1$ will not change its residue {\it mod} $(p)$.

If  $m_1$ is odd the equation $\Sigma\beta_i=0$
always has solutions with all $\beta_i\in(\mathbb{Z}/p^k\mathbb{Z})^\times$.
If $\xi$ is a nonsquare in $(\mathbb{Z}/p^k\mathbb{Z})^\times$
setting $\beta_i'=\xi\beta_i$ for all $i$ gives another solution,
and $[\Pi\beta_i']=[\xi][\Pi\beta_i]$.
(If $p\equiv3$ {\it mod} (4) we may take $\xi=-1$, 
which corresponds to a change of orientation of the 3-manifold.)

If $m_1=4t$ and $w\not\equiv1$ {\it mod} $(p)$
there is an integer $x$ such that $x\equiv\frac12(w-1)$ 
{\it mod} $(p)$.
The images of $x$ and $w-1-x$ are invertible in $\mathbb{Z}/p^k\mathbb{Z}$.
Let $\beta_1=1$, $\beta_2=-w$, $\beta_3=x$ and $\beta_4=w-1-x$
and $\beta_{2i+1}=1$ and $\beta_{2i+2}=-1$ for $2\leq{i}<2t$.
Then $\Sigma\beta_i=0$,
$\beta_4\equiv\beta_3$ {\it mod} $(p)$ and $[(-1)^{r-1}\Pi\beta_i]=[w]$.

If $m_1=4t+2$ and $w\not\equiv1$ {\it mod} $(p)$ let
$\beta_1=1$, $\beta_2=w$, $\beta_3=\beta_4=\beta_5=y$, $\beta_6=w-1-3y$ 
and $\beta_{2i+1}=1$ and $\beta_{2i+2}=-1$ for $3\leq{i}\leq2t$,
where $y\equiv-\frac14(1+w)$ {\it mod} $(p)$.
Then $\Sigma\beta_i=0$,
$\beta_6\equiv\beta_3$ {\it mod} $(p)$ and $[(-1)^{r-1}\Pi\beta_i]=[w]$.

These choices work equally well for all $p\geq3$, if $[w]\not=1$.
If $m_1$ is even, $w\equiv1$ {\it mod} $(p)$ and $p>3$ 
there is an integer $n$ such that $n^2\not=0$ or 1 {\it mod} $(p)$,
and we solve as before, after replacing $w$ by $\hat{w}=n^2w$.

However if $p=3$ and $[w]=1$ we must vary our choices.
If $m_1=4t$ with $t>1$ 
let $\beta_1=\beta_2=\beta_3=\beta_4=1$, $\beta_5=\beta_6=-2$
and $\beta_{2i+1}=1$ and $\beta_{2i+2}=-1$ for $3\leq{i}<2t$.
If $m_1=4t+2$ let
$\beta_{2i-1}=1$ and $\beta_{2i}=-1$ for $1\leq{i}\leq2t+1$.
In the remaining case (when $m_1=4$)
we find that if $\Sigma_{1\leq{i}\leq4}\beta_i=0$ then $[-\Pi\beta_i]=[-1]$.
In this case we must use instead
$S=((3^{k+1},1),(3^{k+1},5),(3^k,-1),(3^k,-1))$
to realize the pairing with ${[w]=[1]}$.
\end{proof}

The manifolds with Seifert data as above are
$\mathbb{H}^2\times\mathbb{E}^1$-manifolds,
except when $\rho=1$ and $p=3$, in which case 
they are the flat manifold $G_3$ (with its two possible orientations).
The manifold $M=M(1;(3,1),(3,1),(3,-2))$ is an
$\mathbb{H}^2\times\mathbb{E}^1$-manifold with $T(M)\cong{\mathbb{Z}/3\mathbb{Z}}$
and base orbifold a torus with cone points.

It follows immediately from Theorem 4 and Lemma 1
that every linking pairing on a finite abelian group of odd order
is realized by some Seifert manifold $M(0;S)$ with $\varepsilon_S=0$.
All such pairings may also be realized by Seifert fibred manifolds 
which are $\mathbb{Q}$-homology spheres (which have $\varepsilon_S\not=0$).
However we must be carefull to ensure that the {\it numerator} 
of $\varepsilon_S$ does not provide unexpected torsion.

\begin{theorem}
Let $\ell$ be a linking pairing on a finite abelian group $A$ of odd order.
Then there is a Seifert manifold $M=M(0;S)$ such that 
$\varepsilon_S\not=0$ and $\ell_M\cong\ell$.
\end{theorem}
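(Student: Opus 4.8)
The plan is to mimic the construction of Theorem 4, but to arrange the generalized Euler invariant to be a \emph{nonzero} rational whose numerator involves only the primes dividing $|A|$. First I would decompose $\ell$ as the orthogonal sum $\perp_p\ell_p$ over the primes $p$ dividing $|A|$, and each $\ell_p$ into its homogeneous components as in the block reduction of \S6. For a single prime $p$ the determinantal invariant of each component is computed by Theorem 3; in the present situation I want to invoke part (2) (the case $\varepsilon_S\neq0$) for the component of largest exponent. Thus I would realize the top component of $\ell_p$, of rank $\rho_1$, by $r_p=\rho_1+1$ exceptional fibres of order a power of $p$ together with the extra generator $s$, rather than by the $\rho_1+2$ fibres used in Theorem 4, and read off the lower components from the diagonal blocks after block-diagonalizing as in \S6. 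The residues mod $p$ of the $\beta_i$, and the $p$-adic valuation of $\alpha_1\varepsilon_S$, are then dictated by Lemma 2(2),(3) and by the prescribed value of $d(\ell_p)$, exactly as in the proof of Theorem 4.

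Having fixed these local data, I would take $S$ to be the concatenation of the prime-power blocks $S_p$, so that every cone point order is a power of a prime dividing $|A|$ and $\varepsilon_S=\sum_p\varepsilon_{S_p}$. Localizing at each $p\mid|A|$ recovers the desired $\ell_p$, since the localized pairing depends only on the $p$-primary block. The difference from Theorem 4 is that now $\varepsilon_S\neq0$, so $M(0;S)$ is a $\mathbb{Q}$-homology sphere by the computation of \S2.

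The main obstacle is exactly the one flagged before the statement, namely the numerator. For such a manifold $|T(M)|=|\varepsilon_S\prod_i\alpha_i|=|\sum_i\beta_i\prod_{j\neq i}\alpha_j|$, and at any prime $q\nmid|A|$ every $\alpha_i$ is a $q$-adic unit, so that $r_q=0$ and $\mathbb{Z}_{(q)}\otimes T(M)\cong\mathbb{Z}_{(q)}/\alpha_1\varepsilon_S\mathbb{Z}_{(q)}$ by \S2. Hence if the integer $N=\sum_i\beta_i\prod_{j\neq i}\alpha_j$ had a prime factor $q$ outside those dividing $|A|$, the manifold would carry spurious $q$-torsion and $\ell_M$ could not be isomorphic to $\ell$. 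I must therefore choose the integer lifts $\beta_i$ of the prescribed residues so that $|N|=|A|$ on the nose, i.e. so that $N$ is, up to sign, the product of just the primes dividing $|A|$ with their correct multiplicities.

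This is where the freedom in the construction is spent. Having pinned down the $\beta_i$ mod $p$ to fix each $d(\ell_p)$, I would adjust the higher $p$-adic digits of the $\beta_i$, and if necessary adjoin auxiliary fibres of order $p$ that contribute trivially to $\ell_p$, so as to solve the single Diophantine condition $N=\pm|A|$; the Chinese Remainder Theorem lets me treat the primes $p\mid|A|$ independently, and a change of orientation absorbs the sign. I expect the delicate part to be verifying that this fine-tuning of the numerator can always be carried out \emph{simultaneously} with all of the local determinant conditions. Once that is secured, the remaining verifications follow the template already established for Theorem 4 and the formulae of \S\S3--4.
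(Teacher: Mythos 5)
Your proposal correctly isolates the central difficulty --- the numerator of $\varepsilon_S$ must not create torsion at primes not dividing $|A|$ --- but it defers exactly that point rather than resolving it, and that is where the entire content of the theorem lies. Concretely: insisting that all cone point orders be prime powers, you need integer lifts $\beta_i$ solving the exact equation $N=\sum_i\beta_i\prod_{j\neq i}\alpha_j=\pm|A|$ \emph{simultaneously} with coprimality of each $\beta_i$ to $\alpha_i$ and the quadratic-character conditions fixing each $d(\ell_j^{(p)})$. Reducing the equation mod $p$ shows it forces the sum of the $\beta_i$ attached to the top-order cone points of the $p$-block to equal a \emph{specific nonzero} residue mod $p$ (in contrast to Theorem 4, where the analogous condition is $\Sigma\beta_i\equiv0$), and this must be compatible with the prescribed square class of the product. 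That system is not always solvable as you have set it up: already for $p=3$ and $\rho_1=2$ the two top numerators $\beta_1,\beta_2$ with $\beta_1+\beta_2\equiv c\not\equiv0$ mod $3$ have $\beta_1\beta_2$ automatically a square, so the nonsquare determinant class is unreachable --- this is exactly why the paper, even in the homogeneous case, must sometimes use a cone point of order $3^{k+1}$ rather than $3^k$. Your appeal to CRT to ``treat the primes independently'' is also too quick, since when $\varepsilon_S\neq0$ the localized pairing at $p$ is \emph{not} a function of the $p$-block alone: it involves $v=\alpha_1\varepsilon_S$, a global quantity mixing all blocks (this is precisely why Lemma 1 requires both Euler numbers to vanish). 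So ``once that is secured'' is not a routine verification; it is the theorem, and your sketch does not secure it.

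The paper sidesteps the Diophantine problem entirely with a device your prime-power requirement rules out by fiat. It decomposes each $\ell^{(p)}$ into cyclic pairings $\ell^{(p)}=\perp\ell_{b_i/a_i}$ and assigns one cone point $(a_i,b_i)$ to each cyclic factor (with a sign twist on the first), and then adjoins a \emph{single} auxiliary pair $(\widetilde\alpha,\widetilde\beta)$ with $\widetilde\alpha=e\prod_{p\in P}p$ ($e$ the exponent of $A$), which serves as the highest-order cone point for every prime at once; crucially it \emph{defines} $\widetilde\beta=-1-\widetilde\alpha\sum_{p}\sum_i\beta_i^{(p)}/\alpha_i^{(p)}$. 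This makes $\varepsilon_S=1/\widetilde\alpha$ identically: the numerator is $1$ by construction, so no spurious torsion can appear and no equation needs to be solved. Moreover $\widetilde\beta\equiv-1$ mod $p$ for every $p\in P$, so the determinantal invariant of the top component at each $p$ is simply $[\prod b_i]$, and the local data are read off the cyclic decomposition with no tuning. If you want to rescue your version, you must prove the simultaneous solvability statement above, including its small-prime exceptional cases; the paper's construction shows that effort is unnecessary.
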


\begin{proof}
Let $A=\oplus_{p\in{P}}A(p)$ and $\ell=\perp_{p\in{P}}\ell^{(p)}$ 
be the primary decompositions of $A$ and $\ell$.
For each $p\in{P}$ we shall define a Seifert data set $S(p)$ as follows.
Suppose that $\ell^{(p)}$ is a pairing on
$\oplus_{j=1}^{t(p)}(\mathbb{Z}/p^{k_j}\mathbb{Z})^{\rho_j}$,
with $\rho_j>0$ for $1\leq{j}\leq{t(p)}$
and $k_j\geq{k_{j+1}}>0$ for $1\leq{j}<{t}$.
Then $\ell^{(p)}=\perp\ell_{b_i/a_i}$,
where the $a_i$ are powers of $p$, with $a_i\geq{a_{i+1}}$
for all $i$.
Let $S(p)=((\alpha_1^{(p)},\beta_1^{(p)}),\dots,
(\alpha_{t(p)}^{(p)},\beta_{t(p)}^{(p)}))$,
where $\alpha_i^{(p)}=a_i$, $\beta_1^{(p)}=(-1)^{\rho_1+1}{b_1}$
and $\beta_i^{(p)}=b_i$, 
for $2\leq{i}\leq\rho(p)=\Sigma_{j=1}^{t(p)}\rho_j$.
Finally let $\widetilde\alpha=e\Pi_{p\in{P}}p$,
where $e$ is the exponent of $A$, and
$\widetilde\beta=-1-\widetilde\alpha
\Sigma_{p\in{P}}\Sigma_{i=1}^{t(p)}\frac{\beta_i^{(p)}}{\alpha_i^{(p)}}$.

Let $S$ be the concatenation of $\{(\widetilde\alpha,\widetilde\beta)\}$
and the $S(p)$s for $p\in{P}$, and let $M=M(0;S)$.
Then $\varepsilon_S=\frac1{\widetilde\alpha}$.
For each $p\in{P}$ there are $\rho(p)+1$ cone points with order
divisible by $p$, 
and $\mathbb{Z}_{(p)}\otimes{T(M)}\cong
\oplus_{j=1}^{t(p)}(\mathbb{Z}/p^{k_j}\mathbb{Z})^{\rho_j}$.
Since $\widetilde\beta\equiv-1$ {\it mod} $(\widetilde\alpha)$,
the determinantal invariant of the component of
$\mathbb{Z}_{(p)}\otimes\ell_M$ of maximal exponent 
$a_1=p^{k_1}$ is $[\Pi{b_i}]$.
Therefore $\mathbb{Z}_{(p)}\otimes\ell_M\cong\ell^{(p)}$,
for each $p\in{P}$,
and so $\ell_M\cong\ell$.
\end{proof}

The manifolds with Seifert data as above are
$\widetilde{\mathbb{SL}}$-manifolds,
except when $A\cong\mathbb{Z}/p^k\mathbb{Z}$ 
and so there are just two cone points,
in which case they are lens spaces ($\mathbb{S}^3$-manifolds).

In the homogeneous $p$-primary case we may 
arrange that all cone points have order $p^k$,
except when $p=3$, $\rho=2$ and $d(\ell)=[1]$.
(This case is realized by $M(0;(3^{k+1},7),(3^k,-1),(3^k,-1))$.)

\section{realization of homogeneous $2$-primary pairings}

The situation is more complicated when $p=2$.
A linking pairing $\ell$ on $(\mathbb{Z}/2^k\mathbb{Z})^\rho$ 
is determined by its rank $\rho$ and certain invariants 
$\sigma_{j}(\ell)\in{\mathbb{Z}/8\mathbb{Z}}\cup\{\infty\}$, 
for $\rho-2\leq{j}\leq\rho$.
(See \S3 of \cite{KK}, and \cite{De}.)
We shall not calculate these invariants here.
Instead, we shall take advantage of the particular form 
of the pairings given in \S3. 

If a linking pairing $\ell$ on $(\mathbb{Z}/2^k\mathbb{Z})^\rho$ 
is even then $\rho$ is also even, 
and either $\ell$ is hyperbolic 
(and is the orthogonal sum of $\frac{\rho}2$ copies of the pairing $E^k_0$) 
or it is the orthogonal direct sum of a hyperbolic pairing of rank $\rho-2$
with the pairing $E_1^k$ (if $k>1$) \cite{KK,Wa64}.
When $k=1$ all even pairings are hyperbolic.
Otherwise, $\ell$ is determined by the image of the matrix
$L=2^k\ell(e_i,e_j)$ in $GL(\rho,\mathbb{Z}/4\mathbb{Z})$.
In particular, if $k>1$ and $L=\left(\smallmatrix 0& c\\
c&d\endsmallmatrix\right)$ with $c$ odd and $d$ even the
pairing is hyperbolic.

We shall say that an element $\frac{p}q\in\mathbb{Z}_{(2)}$ 
is {\it even} or {\it odd} if $p$ is even or odd, respectively.
(Thus $\frac{p}q$ is odd if and only if it is invertible in $\mathbb{Z}_{(2)}$.)

The following result complements the criterion for homogeneity given 
in Lemma 2.

\begin{lemma}
Let $M=M(g;S)$ be a Seifert manifold and 
let $\ell=\mathbb{Z}_{(2)}\otimes\ell_M$.
Assume that the Seifert data are ordered so that
$\alpha_{i+1}$ divides $\alpha_i$ in $\mathbb{Z}_{(2)}$.
Then 
\begin{enumerate}
\item $\ell$ is even if and only if $\frac{\alpha_1}{\alpha_i}$ is odd
for $1\leq{i}\leq{r_2}$ and either $\varepsilon_S=0$ or 
$\alpha_1\varepsilon_S$ is odd;
\item{if} $\frac{\alpha_1}{\alpha_i}$ is odd for $1\leq{i}\leq{r_2}$
then $\alpha_1\varepsilon_S\equiv{r_2}$ mod $(2)$.
\end{enumerate}
\end{lemma}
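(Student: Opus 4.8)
The plan is to reduce evenness to a condition on the explicit cyclic generators of \S2 and then read everything off the self-linking formulae of \S3. Throughout, write $v_2$ for the $2$-adic valuation, and recall that $\beta_i$ is odd whenever $\alpha_i$ is even, and that $n$ is odd (from $m\alpha_2+n\beta_2=1$ with $\alpha_2$ even).

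First I would isolate a detection principle: if a finite abelian $2$-group is written as a genuine direct sum of cyclic summands with basis $e_1,\dots,e_\nu$, where $e_j$ has order $2^{m_j}$, then $\ell$ is even if and only if $2^{m_j-1}\ell(e_j,e_j)\in\mathbb{Z}$ for every $j$. One direction is immediate. For the converse, expand $\ell(x,x)=\sum_i a_i^2\ell(e_i,e_i)+2\sum_{i<j}a_ia_j\ell(e_i,e_j)$ for $x=\sum a_ie_i$ of order $2^m$. Because the decomposition is a true direct sum one has $m\ge m_i-v_2(a_i)$ for each index with $a_ie_i\ne0$, and this forces every cross term $2^m a_ia_j\ell(e_i,e_j)$ into $\mathbb{Z}$; hence only the diagonal terms can obstruct evenness, and these are controlled precisely by the even-type condition on the $e_j$. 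Since \S2 exhibits $\mathbb{Z}_{(2)}\otimes T(M)$ as the direct sum of the cyclic modules on $q_3',\dots,q_r'$ together with $s$ (when $\varepsilon_S\ne0$), it then suffices to test these generators one at a time.

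Next I would compute the type of each self-linking. For $q_i'$ with $i\ge3$, the \S3 formula $\ell(q_i',q_i')=[-\beta_2\beta_i(\alpha_i\beta_2+\alpha_2\beta_i)/\alpha_i^2]$ together with the ordering $v_2(\alpha_2)\ge v_2(\alpha_i)$ gives, after a short valuation count, that $\alpha_i\beta_2+\alpha_2\beta_i$ has valuation exactly $v_2(\alpha_i)$ when $v_2(\alpha_2)>v_2(\alpha_i)$, but valuation strictly larger when $v_2(\alpha_2)=v_2(\alpha_i)$ (two odd multiples of $2^{v_2(\alpha_i)}$ sum to an even one); hence $q_i'$ is even-type exactly when $v_2(\alpha_i)=v_2(\alpha_2)$. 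For $s$ I would rewrite $\ell(s,s)=[-(1+n\alpha_2\varepsilon_S)/\alpha_2^2\varepsilon_S]$ and use that $s$ has order $2^{v_2(\alpha_1\alpha_2\varepsilon_S)}$, splitting into the regime $v_2(\alpha_1)=v_2(\alpha_2)$, where a direct comparison of $v_2(\ell(s,s))$ with the order shows $s$ is even-type iff $\alpha_1\varepsilon_S$ is odd, and the regime $v_2(\alpha_1)>v_2(\alpha_2)$, where $\alpha_1\varepsilon_S$ is automatically odd (the $i=1$ term of $-\sum\beta_i\frac{\alpha_1}{\alpha_i}$ is odd while all others are even) and the valuation count forces $s$ to be odd-type. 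Combining the detection principle with these computations yields statement (1): $\ell$ is even iff $v_2(\alpha_i)=v_2(\alpha_1)$ for all $i\le r_2$ and either $\varepsilon_S=0$ or $\alpha_1\varepsilon_S$ is odd.

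Part (2) is then an independent parity count: in $\alpha_1\varepsilon_S=-\sum_{i=1}^r\beta_i\frac{\alpha_1}{\alpha_i}$, each term with $i\le r_2$ is odd (there $\beta_i$ is odd and $\frac{\alpha_1}{\alpha_i}$ is odd by hypothesis), while each term with $i>r_2$ is even (since $\alpha_1$ is even but $\alpha_i$ is odd), so $\alpha_1\varepsilon_S\equiv r_2\pmod 2$. The main obstacle I anticipate is the self-linking of $s$: keeping the $2$-adic bookkeeping straight across the two regimes, and in particular verifying that $v_2(\alpha_1)>v_2(\alpha_2)$ forces $s$ to be odd-type. A secondary point to get right is phrasing the detection principle so that it remains valid for the inhomogeneous groups arising here, which is exactly why I would insist on using the genuine direct-sum basis of \S2 rather than an arbitrary generating set.
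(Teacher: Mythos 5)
Your proposal is correct in substance and follows essentially the same route as the paper: both arguments test evenness against the explicit cyclic generators $q_i'$ ($i\geq3$) and $s$ supplied by \S2, using the self-linking formulae of \S3. Your ``detection principle'' is exactly what the paper compresses into ``In each case, the converse is clear'', and spelling it out is a genuine service, since evenness is quantified over all of $N$ and the reduction to a cyclic basis is precisely what makes the cross-terms $\ell(q_i',q_j')$ and $\ell(q_i',s)$ irrelevant. Likewise your two-regime analysis of $\ell(s,s)$ is the paper's parity argument (that $\frac{\alpha_1}{\alpha_2}+n\alpha_1\varepsilon_S$ even forces both $\frac{\alpha_1}{\alpha_2}$ and $\alpha_1\varepsilon_S$ to be odd, because $\frac{\alpha_1}{\alpha_2}$ even would make $\alpha_1\varepsilon_S\equiv-\beta_1$ odd) recast in terms of valuations; your verification that the regime $v_2(\alpha_1)>v_2(\alpha_2)$ forces $s$ to be odd-type, and your proof of statement (2), agree with the paper.

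There is, however, one missing step, in the case $\varepsilon_S=0$. There the torsion has no $s$-summand, so your generator analysis yields only: $\ell$ is even if and only if $v_2(\alpha_i)=v_2(\alpha_2)$ for $3\leq{i}\leq{r_2}$. The statement to be proved asserts in addition that $\frac{\alpha_1}{\alpha_2}$ is odd, and this does not follow from evenness at any generator; it follows from the hypothesis $\varepsilon_S=0$ itself, which forces $v_2(\alpha_1)=v_2(\alpha_2)$: writing $\frac{\beta_1}{\alpha_1}=-\Sigma_{i\geq2}\frac{\beta_i}{\alpha_i}$ and multiplying by $\alpha_2$ shows that $\frac{\alpha_2\beta_1}{\alpha_1}\in\mathbb{Z}_{(2)}$, so $v_2(\alpha_1)\leq{v_2(\alpha_2)}$, while the chosen ordering gives the reverse inequality. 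This is the observation recorded in the proof of Lemma 2, and the paper invokes it at the corresponding point (``If moreover $\varepsilon_S=0$ then $\frac{\alpha_1}{\alpha_2}$ is odd''). Without it, the ``if and only if'' in part (1) is not established when $\varepsilon_S=0$: your computations alone leave open the (in fact impossible) configuration $\varepsilon_S=0$, $v_2(\alpha_1)>v_2(\alpha_2)=v_2(\alpha_i)$ for $i\geq3$, in which all generators would be of even type while the stated condition fails. Once this one-line fact is inserted, your proof is complete.
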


\begin{proof}
If $\mathbb{Z}_{(2)}\otimes\ell_M$ is even then 
$\beta_2+\frac{\alpha_2}{\alpha_i}\beta_i$ is even for all $3\leq{i}\leq{r_2}$.
Hence $\frac{\alpha_2}{\alpha_i}$ is odd,
since the $\beta_i$ are all odd.
If moreover $\varepsilon_S=0$ then $\frac{\alpha_1}{\alpha_2}$ is odd.
If $\varepsilon_S\not=0$ then 
$\frac{\alpha_1}{\alpha_2}+n\alpha_1\varepsilon_S$ is even.
Hence $\frac{\alpha_1}{\alpha_2}$ must again be odd,
and so $\alpha_1\varepsilon_S$ is also odd.
In each case, the converse is clear.

The second assertion holds since $\beta_i$ is odd for $1\leq{i}\leq{r_2}$
and $\frac{\alpha_i}{\alpha_i}$ is even for all $i>r_2$.
\end{proof}

We shall suppose for the remainder of this section that 
$\mathbb{Z}_{(2)}\otimes{T(M)}$ is homogeneous of exponent $2^k>1$.

Suppose first that $\mathbb{Z}_{(2)}\otimes\ell_M$ is even.
Then it is homogeneous and of even rank $\rho=2s$.
The diagonal entries of $L$ are all even 
and the off-diagonal entries are all odd.
If $k=1$ then $\mathbb{Z}_{(2)}\otimes\ell_M$ is hyperbolic,
so we may assume that $k>1$.

\begin{theorem}
Let $M=M(g;S)$ be a Seifert manifold such that the even cone
point orders $\alpha_i$ all have the same $2$-adic valuation $k>1$.
Assume that either $\varepsilon_S=0$ or $\alpha_1\varepsilon_S$ is odd.
Let $t$ be the number of diagonal entries of $L$ which are divisible by $4$.
Then  whether $\mathbb{Z}_{(2)}\otimes\ell_M$ is hyperbolic or not
depends only on the images of $t$ and $\rho$ in $\mathbb{Z}/4\mathbb{Z}$.
\end{theorem}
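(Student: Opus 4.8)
The plan is to exploit the structure theorem for even homogeneous 2-primary pairings recalled just before the statement: an even pairing on $(\mathbb{Z}/2^k\mathbb{Z})^\rho$ with $k>1$ is either hyperbolic (an orthogonal sum of $\frac{\rho}{2}$ copies of $E_0^k$) or is a hyperbolic pairing of rank $\rho-2$ orthogonally summed with the single non-hyperbolic even piece $E_1^k$. Thus ``hyperbolic or not'' is a single $\mathbb{Z}/2\mathbb{Z}$-valued invariant, and since the pairing is determined by the image of $L$ in $\mathrm{GL}(\rho,\mathbb{Z}/4\mathbb{Z})$, this invariant must be extractable from $L\bmod 4$. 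First I would identify that invariant explicitly. Under the hypotheses, every diagonal entry of $L$ is even and every off-diagonal entry is odd, so $L\bmod 4$ has a very rigid shape: off-diagonal entries are $\pm1$ and diagonal entries are $0$ or $2$, with exactly $t$ of the latter equal to $2$.

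\textbf{Computing the discriminant-type invariant.}
The natural candidate for the hyperbolicity invariant is the determinant (or a suitable Gauss-sum / Witt-type invariant) of $L$ read modulo a small power of $2$; for even pairings the relevant datum is $\det(L)\bmod 4$ together with $\rho\bmod 4$, exactly paralleling the odd-prime discriminant criterion $d(\ell)=[(-1)^{\rho/2}]$ recorded in \S4. Next I would compute $\det(L)\bmod 4$ purely in terms of $t$ and $\rho$. Reducing $L$ modulo $2$ gives a symmetric matrix with zero diagonal and all off-diagonal entries equal to $1$, i.e. $J-I$ over $\mathbb{F}_2$ where $J$ is all-ones; its rank and the way the $t$ diagonal $2$'s perturb $\det$ one level up (mod $4$) should be computable by a direct but short expansion. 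The cleanest route is to perform symmetric row/column operations over $\mathbb{Z}/4\mathbb{Z}$ (as in the block-diagonalization of \S5) to bring $L$ to a standard form $E_0^k\perp\dots\perp E_0^k$ or $(\perp E_0^k)\perp E_1^k$, and simply track how many $E_1^k$ summands appear; the parity of that number is forced by $t$ and $\rho$ modulo $4$.

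\textbf{Concluding.}
Once $\det(L)\bmod 4$ (equivalently the parity of the number of $E_1^k$ summands) is shown to be a function of $t\bmod 4$ and $\rho\bmod 4$ alone, the theorem follows: hyperbolicity is equivalent to there being zero $E_1^k$ summands, which by the structure theorem is detected by this single invariant. The two hypotheses ($\varepsilon_S=0$ or $\alpha_1\varepsilon_S$ odd) are exactly what Lemma~9(1) needs to guarantee that the whole of $\mathbb{Z}_{(2)}\otimes\ell_M$ is even and homogeneous, so that the structure theorem and the $L\bmod 4$ classification apply; I would invoke Lemma~9 at the outset to justify that $\rho$ is even and $L$ has the stated parity pattern.

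\textbf{Main obstacle.}
The hard part will be the modular determinant bookkeeping: showing that the $2$-valuation data collapses to a clean function of $t$ and $\rho$ modulo $4$, rather than depending on finer features of which diagonal entries are divisible by $4$ or on the precise odd residues of the off-diagonal entries. I expect the symmetric reduction to standard form to work, but one must be careful that the row/column operations stay within the even category (preserving the even/odd pattern) and that pivoting on an odd off-diagonal entry — necessary since the diagonal may be entirely even — does not secretly change the count of $E_1^k$ summands; verifying that each elementary step is invariant-preserving modulo $4$ is where the real care is required.
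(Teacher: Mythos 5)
Your proposal assembles the right ingredients --- evenness and even rank from the hypotheses, the classification of even pairings on $(\mathbb{Z}/2^k\mathbb{Z})^\rho$ ($k>1$) as $\frac{\rho}2E_0^k$ or $(\frac{\rho}2-1)E_0^k\perp E_1^k$, determination of the pairing by $L$ mod $4$, and the relation $2E_1^k\cong2E_0^k$ --- and your ``cleanest route'' (symmetric block reduction over $\mathbb{Z}/4\mathbb{Z}$, counting $E_1^k$ summands) is exactly the paper's method. But there is a genuine gap: the entire computational core is deferred rather than carried out, and the one concrete invariant you propose fails. The matrices of $E_0^k$ and $E_1^k$ have determinants $-1$ and $3$, which agree mod $4$, so $\det(L)\bmod 4$ equals $3^{\rho/2}$ for \emph{every} even pairing and detects nothing; the discriminant separates hyperbolic from non-hyperbolic only mod $8$. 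In the reduction route you never identify the mechanism that makes the bookkeeping possible: when one splits off an invertible $2\times2$ block $E$ (even diagonal, odd off-diagonal) via $G\mapsto G'=G-F^{tr}E^{-1}F$, the entries of the correction term are $\equiv2\pmod4$, so the residues mod $4$ of \emph{all} remaining diagonal entries are flipped ($0\leftrightarrow2$), and the next split flips them back. This alternation is what forces successive split-off blocks to switch between $E_0$-type and $E_1$-type, and it is the only way the count of $E_1^k$ summands gets expressed in terms of $t$ and $\rho$. Your sentence ``the parity of that number is forced by $t$ and $\rho$ modulo $4$'' is precisely the assertion to be proved, and you assert it rather than derive it.

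Moreover, the postponed bookkeeping does not come out the way you expect. Writing $t=4a+x$ and $\rho-t=4b+y$ with $0\le x,y\le3$, the paper's reduction gives $\ell_M\cong(a+b)(E_0^k\perp E_1^k)\perp\ell'$ with $\ell'$ of rank $x+y$, so (using $2E_1^k\cong2E_0^k$) hyperbolicity is governed by the parity of $a+b$ together with the pair $(x,y)$. But the parity of $a+b=(\rho-x-y)/4$ is a function of $t$ and $\rho$ modulo $8$, not modulo $4$: for instance $t=0,\rho=4$ yields $E_0^k\perp E_1^k$ (not hyperbolic), while $t=0,\rho=8$ yields $2(E_0^k\perp E_1^k)\cong4E_0^k$ (hyperbolic), even though both have $t\equiv\rho\equiv0\pmod4$. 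So the hoped-for ``collapse to a clean function of $t$ and $\rho$ modulo $4$'' is not what the computation delivers --- the dependence is on the residues modulo $8$. (This tension lies in the theorem's wording as much as in your plan, but a proof attempt that takes the mod-$4$ claim as its target and promises the bookkeeping will confirm it would, upon doing that bookkeeping, find otherwise.)
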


\begin{proof}
The linking pairing is even, by Lemma 6, and so $\rho$ is even.
We may reorder the basis of $T(M)$ so that $L_{ii}\equiv0$ {\it mod} (4),
for all $i\leq{t}$ and $L_{ii}\equiv2$ {\it mod} (4) for $t<i\leq\rho$.
Let $t=4a+x$ and $\rho-t=4b+y$, where $0\leq{x,y}\leq3$. 
Then $E=\left(\smallmatrix L_{11}&L_{12}\\
L_{21}&L_{22}\endsmallmatrix\right)$ is invertible.
We may partition $L$ as $L=\left(\smallmatrix E& F\\
F^{tr}&G\endsmallmatrix\right)$,
where $G$ is a $(\rho-2)\times(\rho-2)$ submatrix
and $F$ is a $2\times(\rho-2)$ submatrix.
If we conjugate by $J=\left(\smallmatrix I_2&-E^{-1}F\\
0&I_{\rho-2}\endsmallmatrix\right)$ to obtain
$J^{tr}LJ=\left(\smallmatrix{E}&0\\
0&G'\endsmallmatrix\right)$,
then $G'=G-F^{tr}E^{-1}F$.
The entries of $F$ are all odd and so the entries of 
$F^{tr}E^{-1}F$ are all congruent to 2 {\it mod } (4).
Therefore $G'\equiv{G}$ {\it mod} (2),
and so the off-diagonal entries of $G'$ are still odd,
but the residues {\it mod} (4) of the diagonal entries are changed.
An application of this process to $G'$ then restores the
residue classes of the diagonal entries of the corresponding 
$(\rho-4)\times(\rho-4)$-submodule.
Iterating this process, we find that 
$\ell_M\cong (a+b)(E_0^k\perp{E_1^k})\perp\ell'$,
where $\ell'$ has rank $x+y$ and the off-diagonal entries
of the matrix for $\ell'$ are odd.
We also find that $\ell'\cong(x+y)E_0$,
unless $(x,y)=(1,3)$ or (0,2),
in which case $\ell'\cong{E_0^k}\perp{E_1^k}$ or $E_1^k$,
respectively.
Since $2E_1^k\cong{2E_0^k}$ \cite{Wa64},
and the pairing with matrix congruent to
$\left(\smallmatrix 0&1\\
1&2\endsmallmatrix\right)$ {\it mod} (4) is hyperbolic,
it follows that $\ell$ is hyperbolic if and only if either $a+b$ is even and 
$(x,y)\not=(1,3)$ or (0,2), or if $a+b$ is odd and
$(x,y)=(1,3)$ or (0,2).
\end{proof}

It follows immediately from the calculations in \S3 that
\[t=\#\{i\geq3\mid\frac{\alpha_2\beta_i+\alpha_i\beta_2}{2^k}
\equiv0~mod~(4)\}+\delta,\]
where $\delta=1$ if $\varepsilon_S\not=0$ and 
$\beta_2+\alpha_2\varepsilon_S\equiv0$ mod $(4)$,
and $\delta=0$ otherwise.

In particular, 
if $t$ and $\rho$ are divisible by 4 then $\ell_M$ is hyperbolic.

Lemmas 2 and 6 also imply that if $\mathbb{Z}_{(2)}\otimes{T(M)}$ 
is homogeneous of exponent $2^k$ then $\mathbb{Z}_{(2)}\otimes\ell_M$ 
is odd if and only if either 
\begin{enumerate}
\item $\varepsilon_S=0$,
$2^{-k}\alpha_i$ is even for $i=1$ and 2,
and is odd for $2<i\leq{r_2}$; or
\item $\alpha_1\varepsilon_S$ and $2^{-k}\alpha_i$ are odd 
for $1<i\leq{r_2}$, 
and either $2^{-k}\alpha_1$ or $r_2$ is even.
\end{enumerate}
Odd forms on homogeneous 2-groups can be diagonalized.
In the present situation, this follows easily from the next lemma.

\begin{lemma}
Let $\ell$ be an odd linking pairing on $N=(\mathbb{Z}/2^k\mathbb{Z})^2$.
Then $\ell$ is diagonizable.
\end{lemma}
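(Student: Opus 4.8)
The plan is to reduce the claim to a completion-of-the-square argument over the local ring $\mathbb{Z}/2^k\mathbb{Z}$, using the oddness hypothesis only to guarantee that some Gram matrix for $\ell$ has a unit on its diagonal. Once such a unit pivot is available, the usual orthogonalization goes through verbatim, the point being that we never need to invert $2$.

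First I would translate the hypothesis into matrix terms. Fix a basis $e_1,e_2$ of $N$ and let $L$ be the symmetric matrix with entries $L_{ij}=2^k\ell(e_i,e_j)\in\mathbb{Z}/2^k\mathbb{Z}$, which lies in $\mathrm{GL}(2,\mathbb{Z}/2^k\mathbb{Z})$ by nonsingularity. For $x=x_1e_1+x_2e_2$ one has $2^{k-1}\ell(x,x)=\tfrac12\,x^{tr}Lx \bmod\mathbb{Z}$, so this lies in $\mathbb{Z}$ exactly when $x^{tr}Lx$ is even. Since $x^{tr}Lx\equiv L_{11}x_1^2+L_{22}x_2^2\equiv L_{11}x_1+L_{22}x_2\pmod 2$, the pairing is even precisely when both $L_{11}$ and $L_{22}$ are even. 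Hence if $\ell$ is odd then at least one diagonal entry is odd, i.e.\ a unit in $\mathbb{Z}/2^k\mathbb{Z}$, and after possibly interchanging $e_1$ and $e_2$ I may assume $L_{11}$ is a unit. I expect this identification of ``odd'' with ``a unit occurs on the diagonal'' to be the crux of the argument, since it is exactly what makes a unit pivot available; the rest is routine.

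Next I would clear the off-diagonal entry by completing the square. Replacing $e_2$ by $e_2'=e_2-L_{11}^{-1}L_{12}e_1$, equivalently conjugating $L$ by $P=\left(\begin{smallmatrix}1&-L_{11}^{-1}L_{12}\\0&1\end{smallmatrix}\right)$, yields $\ell(e_1,e_2')=0$ and a diagonal Gram matrix $\mathrm{diag}(L_{11},\delta)$ with $\delta=L_{22}-L_{11}^{-1}L_{12}^2=\det(L)/L_{11}$. As $\det(L)$ and $L_{11}$ are both units, so is $\delta$, and therefore $\ell\cong\ell_{w_1}\perp\ell_{w_2}$ for suitable odd $w_1,w_2$, which is the asserted diagonalization. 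This manipulation is legitimate over $\mathbb{Z}/2^k\mathbb{Z}$ precisely because the only element ever inverted is the unit $L_{11}$; in particular the argument uses neither $k\geq3$ nor $k\geq2$ and applies uniformly for every $k\geq1$.
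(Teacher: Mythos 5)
Your proof is correct and follows essentially the same route as the paper: use oddness to place a unit on the diagonal of the Gram matrix, then complete the square (replace the second basis vector by $e_2-L_{11}^{-1}L_{12}e_1$) to kill the off-diagonal entry, inverting only the odd pivot. Your observation that the second diagonal entry equals $\det(L)/L_{11}$ and is hence a unit is a slightly cleaner justification than the paper's case analysis on the parities of $b$ and $d$, but the argument is the same.
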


\begin{proof}
Let $e,f$ be the standard basis for $N$.
Since $\ell$ is odd we may assume that $\ell(e,e)=[2^{-k}a]$, 
where $a$ is odd.
Let $\ell(e,f)=[2^{-k}b]$ and $\ell(f,f)=[2^{-k}d]$.  
(Then $b$ is even and $d$ is odd, or vice-versa,
by nonsingularity of the pairing.)
Let $f'=-a^{-1}be+f$.
Then $\ell(e,f')=0$ and $\ell(f',f')=[2^{-k}d']$,
where $d'\equiv{d-a^{-1}b^2}$ {\it mod} $(2^k)$.
Therefore $\ell\cong\ell_{\frac{a}{2^k}}\perp\ell_{\frac{d'}{2^k}}$.
\end{proof}

Note that if $b\equiv0$ {\it mod} (4) then
$\ell_{\frac{d'}{2^k}}\cong\ell_{\frac{d}{2^k}}$.

Suppose now that $\mathbb{Z}_{(2)}\otimes\ell_M$ is odd and 
$\varepsilon_S=0$.
Then the diagonal entries of $L$ are odd 
and the off-diagonal elements are odd multiples of $2^{-k}\alpha_1$.
We may assume also that $r_2\geq4$, for otherwise 
$\mathbb{Z}_{(2)}\otimes{T(M)}$ is cyclic.
We may partition $L$ as 
$L=\left(\smallmatrix E& F\\
F^{tr}&G\endsmallmatrix\right)$,
where $E\in{GL(2,\mathbb{Z}_{(2)})}$,
$F$ is a $2\times(r_2-4)$ submatrix with even entries
and $G$ is a $(r_2-4)\times(r_2-4)$ submatrix.
Let $J=\left(\smallmatrix I_2&-E^{-1}F\\
0&I_{r_2-4}\endsmallmatrix\right)$.
Then $\mathrm{det}(J)=1$ and 
$J^{tr}LJ=\left(\smallmatrix E&0\\
0&G'\endsmallmatrix\right)$,
where $G'=G-F^{tr}E^{-1}F$.
The columns of $F$ are proportional, and the ratio $u_3/u_4$ is odd.
Since the entries of $F$ are odd multiples of $2^{-k}\alpha_1$
and since $E-I_2$ has even entries, 
$G'\equiv{G}$ {\it mod} (8).
Iterating this process, we may replace $L$ by a block-diagonal matrix,
where the blocks are all $2\times2$ or $1\times1$,
and are congruent {\it mod} (8) to the corresponding blocks of $L$.
Each such $2\times2$ block is diagonalizable, by Lemma 8,
and so we may easily represent $\ell_M$ as an
orthogonal sum of pairings of rank 1.

If $\varepsilon_S\not=0$ then $\ell(q_i',s)=[2^{-k}\frac{\beta_i}{u_i}]$
and $\ell_M(s,s)=[2^{-k}z]$, 
where $\beta_i$, $u_i$ and $z$ are odd,
and we first replace each $q_i'$ by 
$\tilde{q}_i=q_i'-z^{-1}u_i^{-1}\beta_is$.
We then see that $\mathbb{Z}_{(2)}\otimes\ell_M\cong\ell_{2^{-k}z}\perp\tilde\ell$,
where the matrix for $\tilde\ell$ has odd diagonal entries
and even off-diagonal entries, and we may continue as before.

\begin{theorem}
Let $\ell$ be a linking pairing on $(\mathbb{Z}/2^k\mathbb{Z})^\rho$.
Then there is a Seifert manifold $M=M(0;S)$ such that the cone point 
orders $\alpha_i$ are all powers of $2$ and $\ell_M\cong\ell$.
We may have either $\varepsilon_S=0$ or $\varepsilon_S\not=0$.
\end{theorem}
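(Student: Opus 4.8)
The plan is to treat the even and odd cases separately, using the structural facts assembled earlier in this section. By the classification recalled at the start of \S6, a homogeneous even pairing on $(\mathbb{Z}/2^k\mathbb{Z})^\rho$ is either hyperbolic (a sum of copies of $E_0^k$) or the orthogonal sum of a hyperbolic pairing with $E_1^k$, while a homogeneous odd pairing is diagonalizable --- this last point being exactly what the block reduction together with Lemma 8 established just above. So in each case the target is one of a short, explicit list, and it suffices to write down Seifert data $S$, with all $\alpha_i$ powers of $2$, whose localized pairing $\mathbb{Z}_{(2)}\otimes\ell_M$ matches. To fix the underlying group I would use \S2: taking $r_2=\rho+2$ cone points of order $2^k$ with $\varepsilon_S=0$, or $r_2=\rho+1$ such cone points together with one further factor arranged so that $\alpha_1\varepsilon_S$ is odd, produces $\mathbb{Z}_{(2)}\otimes T(M)\cong(\mathbb{Z}/2^k\mathbb{Z})^\rho$ by Lemma 2. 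These two templates are what yield the closing sentence of the statement, allowing either $\varepsilon_S=0$ or $\varepsilon_S\neq0$. In both templates the pairing is read off the $\beta_i$ through the formulae of \S3, so everything reduces to a choice of odd integers $\beta_i$.

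For the odd case I would work inside a template in which the diagonal entries $\ell_M(q_i',q_i')$ of \S3 are odd and the off-diagonal entries even, so that the block reduction of this section applies and $\ell_M$ becomes an orthogonal sum of rank-one pairings $\ell_{a_i/2^k}$. Since $\ell_w\cong\ell_{w'}$ depends only on $2^kw$ modulo $(2^k,8)$, realizing the prescribed diagonal pairing amounts to prescribing each diagonal residue modulo $8$ (modulo $4$ or $2$ when $k\le2$), which the freedom in the $\beta_i$ supplies, after which I must check that the remaining defining relation $\Sigma\beta_i/\alpha_i=\varepsilon_S$ can still be solved with all $\beta_i$ odd. Because the rank-one forms satisfy Witt relations, I need only realize the correct isomorphism class rather than a literal multiset of entries, so there is room to absorb this constraint.

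For the even case I would keep all the even cone point orders equal to $2^k$ and take $\varepsilon_S=0$ (or $\alpha_1\varepsilon_S$ odd), so that $\ell_M$ is even by Lemma 6. Theorem 7 then reduces hyperbolicity to the residues of $\rho$ and of $t$ modulo $4$, where, by the displayed formula following Theorem 7, $t$ counts the indices $i\ge3$ with $\beta_2+\beta_i$ divisible by $4$ (adjusted by $\delta$ when $\varepsilon_S\neq0$). I would accordingly choose the residues of the $\beta_i$ modulo $4$ so as to place $t$ in the class forcing the required answer --- hyperbolic when the target is a sum of $E_0^k$'s, non-hyperbolic when it is a hyperbolic pairing $\perp E_1^k$ --- while keeping all $\beta_i$ odd and respecting $\Sigma\beta_i=0$ (or the corresponding Euler relation when $\varepsilon_S\neq0$). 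The case $k=1$, where every even pairing is hyperbolic, is easier and I would dispose of it first.

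The main obstacle I anticipate is the simultaneous bookkeeping in the even case: I must produce $\rho+2$ (or $\rho+1$) odd integers $\beta_i$ satisfying the rigid additive constraint $\Sigma\beta_i=0$ and yet realizing the value of $t$ modulo $4$ that Theorem 7 needs in order to separate the hyperbolic class from the $E_1^k$ class. Exhibiting such configurations --- in particular the non-hyperbolic ones, together with the small-$\rho$ and small-$k$ boundary cases, much as the $p=3$ exceptions complicated Theorem 4 --- is where the genuine work lies; by comparison the odd case and the passage between the $\varepsilon_S=0$ and $\varepsilon_S\neq0$ templates should be routine once the data are set up.
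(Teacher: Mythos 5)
Your even-case plan is essentially the paper's: keep all even cone point orders equal to $2^k$, note that the pairing is then even by Lemma 6, and use Theorem 7 (via the count $t$ of diagonal entries divisible by $4$) to hit either the hyperbolic class or the class of $(E_0^k)^{\rho/2-1}\perp E_1^k$; the paper just exhibits explicit data ($\beta_i=(-1)^i$, modified by $\beta_1=-3$ or $-5$ together with a short run of $+1$'s), and this bookkeeping is routine, not the hard part. The genuine gap is in the odd case, and it is fatal as stated: your declared templates --- $\rho+2$ cone points all of order $2^k$ with $\varepsilon_S=0$, or $\rho+1$ such cone points with $\alpha_1\varepsilon_S$ odd --- can \emph{never} realize an odd pairing. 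Lemma 6(1) says precisely that when every ratio $\alpha_1/\alpha_i$ is odd and either $\varepsilon_S=0$ or $\alpha_1\varepsilon_S$ is odd, $\mathbb{Z}_{(2)}\otimes\ell_M$ is even. Concretely, with $\alpha_2=\alpha_i=2^k$ the diagonal entries from \S3 are $[-\beta_2\beta_i(\beta_2+\beta_i)/2^k]$, and $\beta_2+\beta_i$ is a sum of two odd numbers, hence even. So the ``template in which the diagonal entries are odd and the off-diagonal entries even'' that your second paragraph assumes is incompatible with the templates you fixed in your first paragraph, and you supply no mechanism for obtaining it.

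The missing idea is that odd homogeneous pairings force a jump in the $2$-adic valuations of the cone point orders: by the criterion displayed between Theorem 7 and Lemma 8, oddness requires $2^{-k}\alpha_1$ to be even (and also $2^{-k}\alpha_2$ when $\varepsilon_S=0$). The paper accordingly takes $\alpha_1=\alpha_2=2^{k+2}$ (resp.\ $\alpha_1=2^{k+2}$ when $\varepsilon_S\neq0$) and all other $\alpha_i=2^k$. This still gives $\mathbb{Z}_{(2)}\otimes T(M)\cong(\mathbb{Z}/2^k\mathbb{Z})^\rho$ (the large cone points do not create torsion of larger order), it makes every off-diagonal entry divisible by $4$ so that the diagonalization of Lemma 8 leaves the diagonal residues mod $8$ untouched (the remark following Lemma 8), and the calibrated choices $\beta_i=3b_i$ with $\beta_1=-1-4\Sigma_{i\geq3}\beta_i$ (resp.\ $\beta_i=4-b_i$) then realize $\perp_i\,\ell_{b_i/2^k}$ exactly. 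Moreover, when $\varepsilon_S\neq0$ and every $b_i\equiv\pm1\pmod 8$ this normalization is impossible, and the paper needs a separate argument (special data for $\rho\leq2$, and the relation $\ell_{w_1}\perp\ell_{w_2}\cong2\ell_{w'}$ with $w'=3\cdot2^{-k}$ otherwise); your appeal to ``Witt relations'' gestures at this flexibility but cannot repair the template itself. In short, you located the difficulty in the even case, where your plan is fine, rather than in the odd case, where your setup breaks down.
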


\begin{proof}
Suppose first that $\ell$ is even.
Then $\rho$ is also even, and
$\ell_M\cong(E_0^k)^{\frac{\rho}2}$ or
$(E_0^k)^{\frac{\rho}2-1}\perp{E_1^k}$.
Let $S=((2^k,\beta_1),\dots,(2^k,\beta_r))$
with $\beta_i=(-1)^i$ for ${1\leq{i}\leq{r}}$.
Then $\varepsilon_S=-\frac1{2^k}$ if $r$ is odd and
$\varepsilon_S=0$ if $r$ is even, 
and $\ell_M\cong(E_0^k)^{\frac{\rho}2}$. 

If $\rho\equiv2$ {\it mod} (4) let $\beta_1=-3$, 
$\beta_2=\beta_3=1$ and $\beta_i=(-1)^i$ for $4\leq{i}\leq{r}$,
where $r=\rho+1$ or $\rho+2$.
If $\rho\equiv0$ {\it mod} (4) let $\beta_1=-5$, 
$\beta_2=\dots=\beta_5=1$ and $\beta_i=(-1)^i$ for $6\leq{i}\leq{r}$,
where $r=\rho+1$ or $\rho+2$.
In each case $\varepsilon_S=-\frac1{2^k}$ if $r$ is odd and
$\varepsilon_S=0$ if $r$ is even, 
and $\ell_M\cong(E_0^k)^{\frac{\rho}2-1}\perp{E_1^k}$. 

Now suppose that $\ell$ is odd.
Then $\ell\cong\perp_{i=1}^\rho\ell_{w_i}$,
where $w_i=2^{-k}b_i$ for $1\leq{i}\leq\rho$.
If $\varepsilon_S=0$ we let $r=\rho+2$ and
$S=((\alpha_1,\beta_1),\dots,(\alpha_r,\beta_r))$,
where $\alpha_1=\alpha_2=2^{k+2}$, 
$\alpha_i=2^k$ for $3\leq{i}\leq{r}$, $\beta_2=1$, 
$\beta_i=3b_i$ for $3\leq{i}\leq{r}$ and 
$\beta_1=-1-4\Sigma_{i\geq3}\beta_i$.
Then $\ell_M\cong\ell$.
(Here we may use the observation following Lemma 8.)

If $\varepsilon_S\not=0$ we let $r=\rho+1$.
Here we must take into account the change of basis suggested 
in the paragraph before the theorem.
Suppose first that some $b_i\equiv\pm3$ {\it mod} (8).
We may then arrange that $z=3$ and the matrix for $\tilde\ell$ 
is congruent {\it mod} (4) to a diagonal matrix.
After reordering the summands, 
and allowing for a change of orientation,
we may assume that $b_1\equiv3$ {\it mod} (8).
Then we let $S=((\alpha_1,\beta_1),\dots,(\alpha_r,\beta_r))$,
where $\alpha_1=2^{k+2}$, 
$\alpha_i=2^k$ for $2\leq{i}\leq{r}$, $\beta_2=1$, 
$\beta_i=4-b_i$ for $3\leq{i}\leq{r}$ and $\beta_1=1-4\Sigma_{i=2}^r\beta_i$.
Then $\varepsilon_S=-\frac1{2^k}$ and $\ell_M\cong\ell$.

Finally, suppose that $b_i\equiv\pm1$ {\it mod} (8) for all $i$.
If $\rho=1$ let $S=((2^{k+1},1),(2^k,1))$.
If $\rho=2$ and $b_1\equiv-b_2\equiv1$ {\it mod} (8),
let $S=((2^{k+1},1),(2^k,1),(2^k,-1)$.
Otherwise we may assume that $b_1\equiv{b_2}$ {\it mod} (8).
But then $\ell_{w_1}\perp\ell_{w_2}\cong2\ell_{w'}$,
where $w'=2^{-k}.3$ \cite{KK},
and we are done.
\end{proof}

\begin{cor}
Let $\ell$ be a linking pairing on a finite abelian group 
with homogeneous $2$-primary subgroup.
Then there is a Seifert manifold $M=M(0;S)$ 
such that $\ell_M\cong\ell$.
We may have either $\varepsilon_S=0$ or $\varepsilon_S\not=0$.
\end{cor}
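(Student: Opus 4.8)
The plan is to prove both assertions by localizing $\ell$ at each prime and then amalgamating the prime-by-prime realizations into a single Seifert datum. Write $A$ for the underlying group and $\ell=\perp_{p\in{P}}\ell^{(p)}$ for the orthogonal splitting into $p$-primary pairings, where $P$ is the finite set of primes dividing $|A|$. By hypothesis the $2$-primary summand (if $2\in{P}$) is homogeneous, say on $(\mathbb{Z}/2^k\mathbb{Z})^\rho$, so Theorem 9 applies to it, while Theorems 4 and 5 apply to each odd-primary summand. The two target values of $\varepsilon_S$ are then handled by two different amalgamation schemes.

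For the case $\varepsilon_S=0$ the argument is immediate. By Theorem 4 each odd summand $\ell^{(p)}$ is realized by some $M(0;S(p))$ whose cone point orders are powers of $p$ and with $\varepsilon_{S(p)}=0$, and by Theorem 9 the $2$-primary summand is realized by some $M(0;S(2))$ with cone point orders powers of $2$ and $\varepsilon_{S(2)}=0$. Since cone point orders belonging to distinct primes are mutually coprime and every $\varepsilon_{S(p)}$ vanishes, I form the iterated fibre-sum over $p\in{P}$ and apply Lemma 1 repeatedly. The resulting $M(0;S)$, with $S$ the concatenation of the $S(p)$, satisfies $\varepsilon_S=0$ and $\ell_M=\perp_{p}\ell^{(p)}=\ell$.

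For the case $\varepsilon_S\neq0$ Lemma 1 is unavailable, so I instead build a single Seifert datum by extending the construction of Theorem 5 to admit the prime $2$. Following Theorem 5, I retain the odd blocks $S(p)$ and introduce one Euler-number-setting cone point $(\widetilde\alpha,\widetilde\beta)$ whose order is divisible by a high power of every prime in $P$ (in particular by $p^{k_1+1}$ at each odd $p$, exactly as before). For the prime $2$ I adjoin a $2$-primary block $S(2)$ of the shape used in the $\varepsilon_S\neq0$ constructions inside the proof of Theorem 9, so that after the change of basis $\tilde q_i=q_i'-z^{-1}u_i^{-1}\beta_i s$ recorded there the $2$-local pairing reduces to $\ell^{(2)}$. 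The integer $\widetilde\beta$ is then pinned down by the Chinese Remainder Theorem: I prescribe $\widetilde\beta\equiv-1$ modulo each odd prime power dividing $\widetilde\alpha$ (so that Theorem 3(2) makes the top determinantal invariant $[\Pi b_i]$, as in Theorem 5), impose the residue modulo $8$ demanded by the $2$-primary construction, and require $\gcd(\widetilde\alpha,\widetilde\beta)=1$ together with $\varepsilon_S=-\Sigma\beta_i/\alpha_i\neq0$; these conditions are at coprime moduli and hence jointly solvable. Localizing the resulting $M(0;S)$ at each odd $p$ then reproduces the verification of Theorem 5 verbatim, giving $\mathbb{Z}_{(p)}\otimes\ell_M\cong\ell^{(p)}$.

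The main obstacle is the $2$-local verification, which splits according to whether $\ell^{(2)}$ is even or odd, the criterion of Lemma 6 forcing a corresponding choice of the $2$-adic valuation of $\widetilde\alpha$. When $\ell^{(2)}$ is odd I take $v_2(\widetilde\alpha)=k+2$, so that the common cone point plays the role of the order-$2^{k+2}$ point of Theorem 9; since every ratio $\widetilde\alpha/\alpha_i$ is then even, Lemma 6 records the pairing as odd, and the block-reduction and diagonalization arguments preceding Theorem 9 (using Lemma 8) identify $\mathbb{Z}_{(2)}\otimes\ell_M$ with $\perp\ell_{w_i}$. When $\ell^{(2)}$ is even I instead take $v_2(\widetilde\alpha)=k$, keeping all even cone orders of equal $2$-adic valuation so that evenness is preserved by Lemma 6(1), and I must check through Theorem 7 that the residues of $t$ and $\rho$ modulo $4$ select the correct alternative between a hyperbolic pairing and a hyperbolic pairing orthogonal to $E_1^k$. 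Verifying that the adjoined odd-prime cone points, which are invisible in $\mathbb{Z}_{(2)}$ because their orders are $2$-adic units, together with the single common cone point, do not perturb these residues modulo $8$ is the delicate point; it follows from the explicit formula for $t$ recorded after Theorem 7 and from the remaining freedom in the genuinely $2$-primary parameters $\beta_i$ of $S(2)$. With both localizations confirmed one has $\ell_M\cong\ell$ with $\varepsilon_S\neq0$, completing the second case.
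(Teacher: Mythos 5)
Your proposal is correct and follows essentially the same route as the paper: the $\varepsilon_S=0$ case is handled exactly as the paper does, by combining Theorems 4 and 9 via the fibre-sum Lemma 1, and the $\varepsilon_S\neq0$ case is handled by extending the single-datum amalgamation of Theorem 5 (one Euler-number-setting cone point $(\widetilde\alpha,\widetilde\beta)$ chosen by CRT, plus per-prime blocks), which is precisely what the paper's terse ``argue as in Theorem 5'' prescribes. Your elaboration of the $2$-local verification (choosing $v_2(\widetilde\alpha)$ according to the parity of $\ell^{(2)}$ and controlling residues mod $8$ through $\widetilde\beta$ and the $\beta_i$ of $S(2)$) supplies detail the paper omits, but it is the same argument.
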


\begin{proof} 
The case with $\varepsilon_S=0$ follows immediately from 
Theorems 4 and 9 and Lemma 1.
If $\varepsilon_S\not=0$ we must argue as in Theorem 5.
\end{proof}

The manifolds constructed in this section are either
$\mathbb{H}^2\times\mathbb{E}^1$-manifolds (if $\varepsilon_S=0$),
or $\widetilde{\mathbb{SL}}$-manifolds (if $\varepsilon_S\not=0$),
with the exceptions of the half-turn flat manifold
$M(0;(2,-1),(2,1),(2,-1),(2,1))$
and the $\mathbb{S}^3$-manifolds $M(0;(2,1),(2,1),(2,\beta))$ 
and $M(0;(4,1),(2,1),(2,\beta))$.

\section{the inhomogeneous case: $p=2$}

The next theorem suffices to show that there are
pairings which cannot be realized by orientable Seifert fibred 3-manifolds.
(These are counter-examples to a conjecture raised in \cite{BD}.)

\begin{theorem}
Let $M=M(g;S)$ be a Seifert manifold,
and let $S(2)=((\alpha_1,\beta_1),\dots,(\alpha_t,\beta_t))$ 
be the subset of the Seifert data with even cone point orders $\alpha_i$,
in descending order of divisibility by $2$.
Then ${\mathbb{Z}_{(2)}\otimes\ell_M}$ has a nontrivial even 
component if and only if $\alpha_1, \alpha_2$ and $\alpha_3$
have the same $2$-adic valuation and $\alpha_1\varepsilon_S=0$ 
or is oddn.
The even component has exponent $\alpha_1$, 
and all the other components are diagonalizable.
\end{theorem}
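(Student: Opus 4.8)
The plan is to localize at $2$ and, using the block-diagonalization of \S5, reduce the problem to the single homogeneous summand of maximal exponent. Order the data so that $\alpha_{i+1}$ divides $\alpha_i$ in $\mathbb{Z}_{(2)}$, write $v_2$ for the $2$-adic valuation, and set $k=v_2(\alpha_1)$, so that $2^k$ is the largest possible exponent of $\mathbb{Z}_{(2)}\otimes T(M)$. After the substitution of \S2 this group is generated by $q_3',\dots,q_r'$, with $q_i'$ of order $2^{v_2(\alpha_i)}$, together with $s$ of order $2^{v_2(\alpha_1\alpha_2\varepsilon_S)}$ when $\varepsilon_S\neq0$. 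First I would record the parity constraint coming from $\varepsilon_S=-\Sigma\beta_i/\alpha_i$: since the $\beta_i$ are odd and $u_i=2^{-k}\alpha_i$, the coefficient of $2^{-k}$ in this sum is $\Sigma_{v_2(\alpha_i)=k}\beta_iu_i^{-1}$, a sum of $c$ odd units, where $c$ is the number of cone point orders of valuation $k$. Hence $\varepsilon_S=0$ forces $c$ even, and in particular $v_2(\alpha_1)=v_2(\alpha_2)$; this already rules out an even summand occurring below the top exponent when $\varepsilon_S=0$.

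Next I would read off the parities of the entries of $L$ from the formulae of \S3. For two generators of equal maximal order one checks that $2^k\ell_M(q_i',q_j')$ and $2^k\ell_M(s,q_i')$ are odd, so the corresponding block has the off-diagonal shape of an $E_0^k$- or $E_1^k$-summand. The diagonal entry $2^k\ell_M(q_i',q_i')$ is even exactly when $v_2(\alpha_2)=v_2(\alpha_i)=k$, because then the numerator $\alpha_i\beta_2+\alpha_2\beta_i$ gains an extra factor of $2$ from $u_i\beta_2+u_2\beta_i$; and $2^k\ell_M(s,s)$ is even exactly when $\alpha_1\varepsilon_S$ is odd, since then $s$ has order $2^{v_2(\alpha_2)}=2^k$ and the numerator of $\ell_M(s,s)$ is even (using that $n$ is odd). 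This gives the ``if'' direction: when $v_2(\alpha_1)=v_2(\alpha_2)=v_2(\alpha_3)=k$ and either $\varepsilon_S=0$ or $\alpha_1\varepsilon_S$ is odd, the top summand has even diagonal and odd off-diagonal, and its rank is $c-2$ (if $\varepsilon_S=0$) or $c-1$ (if $\alpha_1\varepsilon_S$ is odd, the extra generator being $s$). By the parity constraint this rank is even and at least $2$, so the top summand is a nontrivial even pairing of exponent $2^k$, that is, of exponent $\alpha_1$, as claimed.

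For the converse I would show that failure of any hypothesis makes every homogeneous component odd. If $v_2(\alpha_3)<k$ while $v_2(\alpha_1)=v_2(\alpha_2)=k$, the top summand has at most one generator of maximal order and, being nonsingular, must be odd; if $\alpha_1\varepsilon_S$ is a nonzero even number then $2^k\ell_M(s,s)$ is odd, again forcing the top summand to be odd. When $v_2(\alpha_2)=v_2(\alpha_1)$ the generators of lower order satisfy $v_2(\alpha_i)<v_2(\alpha_2)$, so their diagonal entries are odd and the block-diagonalization of \S6 with Lemma 8 diagonalizes every lower component. This gives the equivalence and the diagonalizability of all components other than the even one. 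The step I expect to be the main obstacle is the remaining configuration $v_2(\alpha_2)<v_2(\alpha_1)$ (which forces $\varepsilon_S\neq0$): here the generator $s$ sits at the maximal exponent and couples, through the odd entries $\ell_M(s,q_i')$, to generators $q_i'$ whose diagonal entries happen to be even, so one must carry out the block-diagonalization with $s$ included to confirm that no even summand survives and that the self-pairing of $s$ keeps the top component odd. Controlling how the order of $s$ and the numerator of $\varepsilon_S$ interact with $2^k$ in this case is the crux of the argument.
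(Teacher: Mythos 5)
Your strategy is the same as the paper's: localize at $2$, invoke the block-diagonalization of \S5 (which preserves the parities of the diagonal blocks), read the parities of the entries of $L$ from the formulae of \S3, and use the parity relation between $2^k\varepsilon_S$ and the number $c$ of cone point orders of top $2$-adic valuation (this is the paper's Lemma 6(2), which you correctly extend beyond the homogeneous case). Your ``if'' direction is essentially the paper's argument and is sound.

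The converse, however, has a genuine gap, and it sits in the case you dismiss in one line, not in the case you flag as the crux. Suppose $v_2(\alpha_1)=v_2(\alpha_2)=v_2(\alpha_3)=k$ but $\alpha_1\varepsilon_S$ is even and nonzero, say $v_2(\alpha_1\varepsilon_S)=e\geq1$. Your claim that ``$2^k\ell_M(s,s)$ is odd'' is false here: $s$ has order $2^{k+e}>2^k$ (so $2^k$ is not the exponent of $\mathbb{Z}_{(2)}\otimes T(M)$, contrary to your opening normalization), and it is $2^{k+e}\ell_M(s,s)$ that is an odd unit. The top component is indeed cyclic, hence odd, but that does not close the case: your companion claim that the generators of lower order have odd diagonal entries also fails, because the $q_i'$ with $v_2(\alpha_i)=k$ still have \emph{even} entries $2^k\ell_M(q_i',q_i')$ and odd entries $2^k\ell_M(q_i',q_j')$. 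Splitting off the cyclic summand generated by $s$ perturbs $2^k\ell_M(q_i',q_j')$ only by $-b_ib_jc_0^{-1}2^e$, where $b_i=2^k\ell_M(s,q_i')$ and $c_0=2^{k+e}\ell_M(s,s)$, i.e.\ by even quantities; so an even block of exponent $2^k$ survives. Since $\alpha_1\varepsilon_S$ even forces $c$ even, hence $c\geq4$, that block has rank $c-2\geq2$: a nontrivial even component. Concretely, for $S=((4,1),(4,1),(4,1),(4,5))$ one gets $\varepsilon_S=-2$, $T(M)\cong\mathbb{Z}/32\mathbb{Z}\oplus(\mathbb{Z}/4\mathbb{Z})^2$, and (setting $\tilde q_i=q_i'+24s$) the formulae of \S3 give $\ell_M\cong\ell_{25/32}\perp E_1^2$; the Gauss sum $\Sigma_x\exp(4\pi i\,\ell_M(x,x))$ is nonzero, so no rediagonalization can remove the even component. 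Thus your converse cannot be completed as written --- indeed this configuration contradicts the stated ``only if'' direction itself, and the paper's own (very terse) proof is equally silent on it: its parenthetical observations treat only the case $v_2(\alpha_3)<v_2(\alpha_2)=v_2(\alpha_1)$ and the blocks of lower exponent. By contrast, the configuration you single out as the main obstacle, $v_2(\alpha_2)<v_2(\alpha_1)$, is harmless: there $\alpha_1\varepsilon_S$ is automatically odd, $s$ has order $2^{v_2(\alpha_2)}$ with odd self-pairing, so the top component is odd and, like all the others, diagonalizable.
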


\begin{proof}
The block-diagonalization process of \S5 does not change 
the parity of the entries in the diagonal blocks $D_i$.
The first assertion then follows easily from the calculations of \S3.
(In particular, note that if $\alpha_1$ and $ \alpha_2$ 
have the same $2$-adic valuation but $\alpha_3$ properly 
divides $\alpha_2$ in $\mathbb{Z}_{(2)}$ then $\alpha_1\varepsilon_S$ is even,
while if $\alpha_i$ and $\alpha_j$ properly divide $\alpha_2$ 
and $i\not=j$ then $\alpha_i\ell_M(q_i',q_i')$ is odd and 
$\alpha_i\ell_M(q_i',q_j')$ is even.)
\end{proof}

In particular, if $\alpha_1\varepsilon_S$ is even but nonzero then 
the image of $s$ in $\mathbb{Z}_{(2)}\otimes{T(M)}$
has order greater than $\alpha_1$,
and the component of highest exponent is cyclic.

For example, $\ell_{\frac14}\perp{E_0^1}$ 
is the linking pairing of the $\mathbb{N}il^3$-manifold
$M(0;(2,1),(2,1),(2,1),(2,-1))$,
but is not realizable by a Seifert manifold with $\varepsilon_S=0$.
The pairing $E_0^2\perp{E_0^1}$ is not realized by any Seifert manifold
(as defined above, i.e., with orientable base orbifold).

Let $M=M(-k;S)$ be the orientable 3-manifold which is Seifert fibred 
over a {\it non}-orientable base orbifold with underlying surface
$\#^kRP^2$, for some $k\geq1$.
If $S=((2,1),(2,1),(2,1),(2,1))$ and $k=2$ then
$\ell_M\cong{E_0^2}\perp{E_0^1}$.
However if $\mathbb{Z}_{(2)}\otimes{T(M)}$ has exponent divisible by 16 
and a direct summand of order $2$ then
there are cone point orders $\alpha_1$ and $\alpha_m$ such that
$\alpha_1$ is divisible by $4$ and $\alpha_m=2u_m$ with $u_m$ odd,
by Lemma 3.4 of \cite{CH}.
It then follows from Theorem 3.7 of \cite {CH} that
$\mathbb{Z}_{(2)}\otimes\ell_M\cong\ell'\perp\ell_{\frac12}$,
for some pairing $\ell'$.
In particular, 
$E_0^4\perp{E_0^1}$ is not realized by any orientable Seifert fibred 3-manifold
at all.

On the other hand every linking pairing $\ell$ 
on a finite abelian group is realized by some oriented 3-manifold.
(In \cite{KK} the manifold is constructed by connect-summing simple pieces,
but such a manifold may also be obtained by surgery on a framed link,
with linking matrix determined by $\ell$.
See \cite{AHV} or Exercise 5.3.13(g) of \cite{GS}.)
Since every closed orientable 3-manifold is homology cobordant 
to a hyperbolic 3-manifold \cite{My},
$\ell$ is also the linking pairing of an aspherical 3-manifold.

We expect that the conditions of Theorem 10 are the only constraint on the 
class of linking pairings realized by Seifert manifolds.
However we have only been able to confirm this under a ``gap" condition on
the 2-primary torsion.

\begin{theorem}
Let $\ell=\perp_{j=1}^t\ell_j$ be a linking pairing on a finite 
abelian $2$-group,
where $\ell_j$ is a pairing on $(\mathbb{Z}/2^{k_j}\mathbb{Z})^{\rho_j}$,
with $\rho_j>0$ for $1\leq{j}\leq{t}$, $k_j\geq{k_{j+1}+2}$ 
for $1\leq{j}<{t}$ and $k_t>0$.
Suppose also that $\ell_j$ is odd for all $2\leq{j}\leq{t}$.
Then there is a Seifert manifold $M=M(0;S)$ such that the cone point orders 
$\alpha_i$ are all powers of $2$ and $\ell_M\cong\ell$.
We may have either $\varepsilon_S=0$ or $\varepsilon_S\not=0$.
\end{theorem}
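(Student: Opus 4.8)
The plan is to realize $\ell$ by a single Seifert manifold $M(0;S)$ with all cone point orders powers of $2$, extending the constructions of Theorems 4, 5 and 9 to the stratified setting of the exponents $k_1>\dots>k_t$; the gap hypothesis $k_j\ge k_{j+1}+2$ is precisely what decouples the strata. First I would apply Lemma 8 and the diagonalization preceding Theorem 9 to each odd component, writing $\ell_j\cong\perp_i\ell_{2^{-k_j}b_{j,i}}$ with every $b_{j,i}$ odd, for $2\le j\le t$ (and for $j=1$ as well, when $\ell_1$ is odd). Throughout I set $\alpha_2$ equal to the largest cone point order, so that in the formulae of \S3 every off-diagonal entry of the matrix $L$ that meets a stratum below the top acquires the surplus factor $2^{k_1-k_j}\ge4$.

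The construction then splits on the parity of $\ell_1$. If $\ell_1$ is odd I take $\alpha_1=\alpha_2=2^{k_1+2}$ as auxiliary cone points, exactly as in the odd case of Theorem 9; this forces even the off-diagonal entries within the top stratum to be divisible by $4$, and the body cone points of order $2^{k_j}$ are decorated with $\beta=3b_{j,i}$ so that each diagonal self-linking is congruent mod $8$ to the target $b_{j,i}$ (the computation is the one already performed for Theorem 9). If $\ell_1$ is even I instead realize it by the all-$2^{k_1}$ pattern of the even case of Theorem 9, keeping odd off-diagonal entries within the top stratum so as to reproduce the required copies of $E_0^{k_1}$ and the possible $E_1^{k_1}$; here the choice $\alpha_2=2^{k_1}$ already supplies the surplus for the lower strata. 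In either case $\beta_1$ is chosen last, to fix $\varepsilon_S$ at the desired value (zero, or an element with $\alpha_1\varepsilon_S$ odd as required by Lemma 6), and when $\varepsilon_S\ne0$ the generator $s$ is absorbed by the substitution $\tilde q_i=q_i'-z^{-1}u_i^{-1}\beta_i s$ of \S6.

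It remains to run the block-diagonalization of \S5 from the top stratum downwards and to check that each diagonal block is preserved up to the invariant of its stratum, namely mod $8$ for the odd strata. When $\ell_1$ is odd this is immediate: every off-diagonal block touching a stratum is divisible by $4$, so the within-stratum diagonalizations preserve the diagonal mod $8$ by the remark after Lemma 8, and the Schur complements, being divisible by $16$ beyond the block's own $2$-adic scale, do not disturb the residues mod $8$. The delicate point, and the step I expect to be the main obstacle, is the even-top case: there $\alpha_2=2^{k_1}$ improves the top-to-lower entries only to odd multiples of $2^{k_1-k_j}$, and the extra factor of $2$ needed to reach divisibility by $8$ must come from the form itself. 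The key observation is that the dual of an even form over $\mathbb{Z}_{(2)}$ is again even, so for the even top block $D_1$ (a sum of copies of $E_0^{k_1}$ with at most one $E_1^{k_1}$, whose inverses are $\left(\smallmatrix0&1\\1&0\endsmallmatrix\right)$ and $\frac13\left(\smallmatrix2&-1\\-1&2\endsmallmatrix\right)$) the value $B_j^{\mathrm{tr}}D_1^{-1}B_j$ is even for every integral $B_j$; hence the Schur correction $2^{k_1-k_j}B_j^{\mathrm{tr}}D_1^{-1}B_j$ is divisible by $8$, and the lower odd strata survive unchanged mod $8$. Granting this, each stratum is realized as its intended orthogonal summand and the cross terms vanish, giving $\ell_M\cong\ell$; reconciling the single relation $\Sigma\beta_i$ and the value of $\varepsilon_S$ with the local determinant data is then routine bookkeeping.
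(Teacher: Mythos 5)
Your proposal is correct and, in its architecture, matches the paper's own proof: realize the top stratum by the Theorem 9 patterns (all cone points of order $2^{k_1}$ when $\ell_1$ is even, two auxiliary cone points of order $2^{k_1+2}$ when $\ell_1$ is odd), give the lower strata odd decorations, choose $\beta_1$ last to set $\varepsilon_S$, then run the block-diagonalization of \S5 from the top down and use the gap condition $k_j\geq{k_{j+1}+2}$ to see that every Schur correction to a lower diagonal block vanishes mod $8$; the case $\varepsilon_S\not=0$ is handled via the change of basis from \S6, as in the paper. Where you genuinely differ is at the step you rightly single out as delicate: for an even top block the paper obtains evenness of $B_m^{tr}A^{-1}B_n$ by explicitly inverting its particular matrix $A$ (diagonal entries $-\beta_i(1+\beta_i)$, off-diagonal entries $-\beta_i\beta_j$), whereas you invoke the general fact that the inverse of a nonsingular even form over $\mathbb{Z}_{(2)}$ is again even (indeed $x^{tr}A^{-1}x=(A^{-1}x)^{tr}A(A^{-1}x)$ with $A^{-1}x\in\mathbb{Z}_{(2)}^{\rho_1}$). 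Your argument is cleaner and more robust, since it is independent of the detailed shape of $A$, and it suffices: only the diagonal corrections $2^{k_1-k_j}\,b_p^{tr}A^{-1}b_p$ must vanish mod $8$, the off-diagonal entries of the lower blocks staying divisible by $4$ for scale reasons alone, so the diagonalization of Lemma 8 still preserves their diagonals mod $8$. One slip in the bookkeeping you defer as routine: the decoration $\beta_i=3b_{j,i}$ yields diagonal entry $-\beta_i(1+4\beta_i)\equiv{b_{j,i}}\pmod8$ only where $\alpha_2/\alpha_i=4$, i.e.\ in the top stratum of your odd-top layout; at a lower stratum one has $\alpha_2/\alpha_i\geq16$ there, so the diagonal entry is $\equiv-\beta_i\equiv5b_{j,i}\pmod8$, which is \emph{not} equivalent to $b_{j,i}$, and you must instead take $\beta_i\equiv-b_{j,i}\pmod8$ (the paper likewise leaves these lower-stratum choices implicit). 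This is a local, fixable constant and does not affect the soundness of your approach.
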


\begin{proof}
We may assume that $t>1$, since the homogeneous case is covered by Theorem 9.
Let $k=k_1$ be the exponent of $T(M)$.

In order to realize $\ell$ by a Seifert manifold with $\varepsilon_S=0$,
we must have two cone points of order at least $2^k$, 
and $\rho_j$ cone points of order $2^{k_j}$, for $1\leq{j}\leq{t}$.
If we set $\beta_2=1$ and choose the $\beta_i$ with $i>2$ compatibly with
$\ell$, then we may set $\beta_1=-2^k\Sigma_{i\geq2}\frac{\beta_i}{\alpha_i}$.

If $\ell_1$ is even then $\rho_1 $ is even
and we must have $\alpha_1=\alpha_2=2^k$ also.
If $\ell_1$ is hyperbolic let $\beta_i=(-1)^i$ for $1\leq{i}\leq\rho_1+2$;
if $\ell_1$ is even but not hyperbolic and $\rho_1\equiv2$ {\it mod} (4) 
let $\beta_3=1$ and $\beta_i=(-1)^i$ for $4\leq{i}\leq\rho_1+2$;
and if $\ell_1$ is even but not hyperbolic and $\rho_1\equiv0$ {\it mod} (4)
$\beta_3=\beta_4=\beta_5=1$ and $\beta_i=(-1)^i$ for $6\leq{i}\leq\rho_1+2$.

When $p=2$ the block-diagonalization process of \S5 
does not change the parity of the
entries of the blocks $B_m$ and $D_n$.
In our situation this process allows a more refined reduction. 
The block $A=D_1$ has diagonal entries ${-\beta_i(1+\beta_i)}$
and off-diagonal entries $-\beta_i\beta_j$, for $3\leq{i,j}\leq\rho_1+2$. 
Therefore 
$[B_m^{tr}A^{-1}B_n]_{pq}=-\beta_p\beta_q((\Sigma\beta_i^2)^2+\Sigma\beta_i^3)$,
which is even (for any $3\leq{p,q}\leq\rho$).
Hence the first step of the reduction does not change the 
image of the complementary blocks {\it mod} (4),
and the change {\it mod} (8) depends only on $\rho_1$,
since $-4\equiv4$ {\it mod} (8).
In particular,
if $k_1-k_2\geq2$ this step does not change the images {\it mod} (8)
at all.

The subsequent blocks have odd diagonal elements 
and even off-diagonal elements.
Nevertheless a similar reduction applies,
and further changes depend only on the ranks of the
homogeneous terms, provided that $k_j-k_{j+1}\geq2$ for all $j$.
It is clear that (knowing these ranks) we may then choose the
numerators $\beta_i$ to realize all the pairings $\ell_j$.

If $\ell_1$ is odd then 
$\ell_1\cong\perp_{i=1}^{\rho_1}\ell_{w_i}$,
where $w_i=2^{-k}b_i$ for $1\leq{i}\leq\rho$.
Let $\alpha_1=\alpha_2=2^{k+2}$, 
and $\beta_i=3b_i$ for $3\leq{i}\leq\rho_1+2$.
Then we may continue as before.

A similar argument applies when $\varepsilon_S\not=0$.
\end{proof}

\section{Witt classes}

Two pairings $\ell$ and $\ell'$ are {\it Witt equivalent} 
if there are metabolic pairings $\mu$ and $\mu'$ such that
$\ell\perp\mu\cong\ell'\perp\mu'$.
The set of Witt equivalence classes is an abelian group
$W(\mathbb{Q}/\mathbb{Z})$ with respect to orthogonal sum of pairings.
The canonical decomposition into primary summands
gives an isomorphism
\[W(\mathbb{Q}/\mathbb{Z})\cong\oplus_{p~prime}W(\mathbb{F}_p)
\]
where $W(\mathbb{F}_2)=\mathbb{Z}/2\mathbb{Z}$, 
$W(\mathbb{F}_p)\cong\mathbb{Z}/4\mathbb{Z}$ if $p\equiv3$ {\it mod} (4)
and $W(\mathbb{F}_p)\cong(\mathbb{Z}/2\mathbb{Z})^2$ if $p\equiv1$ {\it mod} (4).
If $a,b$ are relatively prime nonzero integers let $w(\frac{b}a)$
be the Witt class of the pairing $\ell_{\frac{b}a}$.
The summands are generated by the classes of such pairings.

In \cite{Oh} bordism arguments are used to compute
the Witt class of $\ell_M$, for $M=M(0;S)$ a Seifert manifold.
If $\varepsilon_S=0$
the Witt class of $\ell_M$ is $-\Sigma{w(\frac{\beta_i}{\alpha_i})}$,
while if $\varepsilon_S=\frac{p}q$ (in lowest form)
then it is $-w(\frac1{pq})-\Sigma{w(\frac{\beta_i}{\alpha_i})}$ \cite{Oh}.
In particular, the image of $\ell_M$ in $W(\mathbb{F}_p)$ is nontrivial if
$\varepsilon_S=0$ and $r_p$ is odd or if 
$\varepsilon_S\not=0$ and $r_p$ is even.

{\it Remark.}
The definition of Witt equivalence given here is
appropriate for obtaining bordism invariants, as in \cite{AHV,Oh}.
However the Witt groups defined in \cite{KK} use a finer equivalence 
relation, involving stabilization by split pairings (rather than by
metabolic pairings).


\end{document}